\documentclass[12pt]{msml2020} 


\usepackage{times}
\usepackage{graphicx}
\usepackage{caption}
\usepackage{amsfonts}
\usepackage{amsmath}
\usepackage{amssymb}
\usepackage{fancyhdr}
\usepackage{titlesec}
\usepackage{indentfirst}
\usepackage{booktabs}
\usepackage{verbatim}
\usepackage{color}
\usepackage{tcolorbox}
\usepackage{mathtools}
\usepackage{bm}
\usepackage{wasysym}
\usepackage{empheq}
\graphicspath{{figures/}} 
\usepackage[font=small,labelfont=bf]{caption} 
\usepackage{chngcntr}
\newcommand{\rom}[1]{\uppercase\expandafter{\romannumeral #1\relax}}
\usepackage[page,header]{appendix}
\usepackage{titletoc}

\numberwithin{equation}{section}
\newtheorem{assumption}[theorem]{Assumption}
\newtheorem{abc}{Algorithm}

\newcommand{\tcb}[1]{\textcolor{black}{#1}}

\newcommand{\argmin}{\operatornamewithlimits{argmin}}

\def\a{\alpha}

\def\b{\beta}
\def\d{\delta}
\def\e{\tilde{J}}
\def\g{\gamma}
\def\lam{\lambda}
\def\o{\omega}
\def\p{\phi}
\def\s{\sigma}
\def\th{\theta}

\def\th{\theta}

\def\O{\Omega}

\def\p{\rho}
\def\Up{\tilde{\Sigma}}



\def\E{\mathbb E}

\def\P{\mathbb P}
\def\R{\mathbb R}
\def\T{\mathbb T}
\def\S{\mathbb S}

\def\l{\left}
\def\r{\right}

\def\ll{\left\lVert}
\def\rl{\right\rVert}
\def\lv{\left\lvert}
\def\rv{\right\rvert}
\def\({\left(}
\def\){\right)}

\def\pt{\partial}
\def\nb{\nabla}

\def\ds{\displaystyle}

\def\qd{\quad}

\def\h{\hat}
\def\t{\tilde}

\def\stp{s_{m+1}}
\def\stpt{s_{m+2}}
\def\st{s_m}
\def\dt{\epsilon}
\def\sdt{\sqrt{\dt}}
\def\J{J}
\def\hJ{\hat{\J}}
\def\f{f}
\def\thst{\th^*}
\def\tp{{t+1}}

\def\dst{\Delta \st}
\def\dstp{\Delta \stp}
\def\hth{\h{\th}}
\def\Sig{\Sigma}
\def\up{\tilde{j}}

\def\p{p}
\def\pinf{p^\infty}
\def\hp{\h{p}}
\def\hpinf{\h{p}^\infty}
\def\hZ{\h{Z}}
\def\hb{\h{\b}}
\def\stt{*}
\def\hst{{\h{*}}}
\def\vst{V^*}
\def\t{m}

\DeclareMathOperator*{\Var}{\mathbb{V}}
\DeclareMathOperator{\diag}{diag}

\title[Borrowing From the Future]{Borrowing From the Future: An Attempt to Address Double Sampling}




\msmlauthor{%
  \Name{Yuhua Zhu} \Email{yuhuazhu@stanford.edu}\\
  \addr Department of Mathematics, Stanford University, Stanford, CA 94305
  \AND
  \Name{Lexing Ying} \Email{lexing@stanford.edu}\\
  \addr Department of Mathematics and ICME, Stanford University, Stanford, CA 94305
}

\begin{document}

\maketitle

\begin{abstract}%
For model-free reinforcement learning, one of the main challenges of stochastic Bellman residual minimization is the double sampling problem, i.e., while only one single sample for the next state is available in the model-free setting, two independent samples for the next state are required in order to perform unbiased stochastic gradient descent. We propose new algorithms for addressing this problem based on the idea of borrowing extra randomness from the future. When the transition kernel
varies slowly with respect to the state, it is shown that the training trajectory of new algorithms is close to the one of unbiased stochastic gradient descent. Numerical results for policy evaluation in both tabular and neural network settings are provided to confirm the theoretical findings.
\end{abstract}

\begin{keywords}%
Reinforcement learning; Policy evaluation; Double sampling; Bellman residual minimization;
Stochastic gradient descent.
\end{keywords}

\section{Introduction}

Reinforcement learning (RL) has received a lot of attention following the recent successes, such as
AlphaGo and AlphaZero \citep{silver2016mastering,silver2017mastering}. At the center of RL is the
problem of optimizing Markov decision process (MDP), i.e., finding the optimal policy that maximizes
the return \citep{sutton2018reinforcement}. As a type of learning with minimal or no supervision, RL
can be more powerful than supervised learning and is often considered to be closer to the natural
learning process.  On the other hand, as an optimization and control problem, RL is also
significantly harder with many practical challenges, such as long trajectories required for
convergence, high dimensional continuous state and action spaces, learning with limited and noisy
samples, etc.

\paragraph{Background.}
This paper considers policy evaluation, also known as prediction, which is one of the most basic
problems of RL. In model-based RL, especially with small to medium-sized state space, value
iteration is commonly used in practice as it guarantees the convergence
\citep{bertsekas1996neuro}. In model-free RL, the temporal difference (TD) algorithm
\citep{sutton1988learning} converges in the tabular setting as well as with linear
approximation. However, the stability and convergence of TD are not guaranteed when the nonlinear
approximation is used \citep{boyan1995generalization,baird1995residual,tsitsiklis1997analysis}. With
the recent development and empirical successes of deep neural networks (DNNs), it becomes even more
important to understand and stabilize nonlinear approximations.

One direction for stabilizing the nonlinear approximation is to formulate the policy evaluation as a
minimization problem rather than a fixed-point iteration; one such example is the Bellman residual
minimization (BRM) \citep{baird1995residual}, which is sometimes also called Bellman error minimization in
the literature. However, BRM suffers from the so-called double sampling problem, i.e., at a given
state, two independent samples for the next state are required to perform unbiased stochastic
gradient descent (SGD). Such a requirement is often hard to fulfill in a model-free setting,
especially for continuous state space.

\paragraph{Related work.}
One way to avoid the double sampling problem in BRM is to consider the primal-dual
representation. This method turns the task into finding a saddle point of a minimax problem. Such a
method includes GTD \citep{sutton2008convergent,sutton2009fast} and its variants
\citep{bhatnagar2009convergent,maei2010toward,dai2017sbeed,liu2015finite}. However, the minimax
problem can be less stable than the original minimization problem when the maximum is taken over a
non-concave function. Therefore, a direct application of the primal-dual method to RL problem with
nonlinear function approximations and continuous state space might result in suboptimal performance
(see Section \ref{sec: compare} and Figure \ref{fig: eg1_compare} for details.)

Another approach for BRM chooses to solve the minimization problem directly.  The stochastic compositional gradient method (SCGD), proposed in \citep{wang2017stochastic,wang2016accelerating}, is a
two time-scale algorithm that addresses the double sampling problem in minimizing a function in the
form of two nested expectations. 
When dealing with the continuous state space, the performance of the SCGD method is
less clear because the minimizer SCGD found is not necessarily the fixed point of the Bellman
operator (see Section \ref{sec: compare} for details.)
\color{black}

\paragraph{Contributions.}
In this paper, we revisit the Bellman residual minimization and develop two algorithms to alleviate
the double sampling problem.  The key idea of the new algorithms is to {\em borrow extra randomness
  from the future}. When the transition kernel varies slowly with respect to the state, we show that
the training trajectories of the proposed algorithms are statistically close to the one of the
unbiased SGD. The proposed algorithms are applied to the prediction problem (i.e., policy
evaluation) in both the tabular and neural network settings to confirm the theoretical findings.
\tcb{We also show that, for continuous state space, our method results in better performance than
  the primal-dual method and SCGD.} Though the discussion here focuses on policy evaluation, the
same techniques can be extended to Q-Learning \citep{watkins1989learning} or value iteration.

\paragraph{Organization.}
The rest of this paper is organized as follows. \tcb{Section \ref{sec: model} introduces the key
  idea of the proposed algorithms and a summary of the paper's main results.  A discussion of
  related work is given in Section \ref{sec: compare}. Section \ref{sec: algo} gives the details of
  the proposed algorithms.} Section \ref{sec: thm pf} bounds the errors between the new algorithm
and the accurate but unrealistic {\em uncorrelated sampling} algorithm. Numerical results are given
in Section \ref{sec: numerics} to confirm the theoretical findings and demonstrate the efficiency of
the new algorithms.





\section{Models and key ideas}

\label{sec: model}


This paper considers both continuous and discrete state space models.
\begin{itemize}
\item In the {\em continuous state space} setting, we consider a discrete-time Markov decision
  process (MDP) with continuous state space. The state space $\S\subset\R^{d_s}$ is a compact set.
  Given a fixed policy, we assume that the one-step transition $P(s,s')$ is prescribed by an unknown
  drift $\a(\cdot)$ and an unknown diffusion $\s(\cdot)$,
  \begin{equation}
    \label{eq: discrete St 0}
    \stp = \st+\a(\st)\dt + \s(\st) \sdt Z_m, \qd Z_m\sim \text{Normal}(0,I_{d_s\times d_s}).
  \end{equation}
When the state reaches the boundary, it follows a prescribed boundary condition. Here, we choose
  to work with a stochastic differential equation (SDE) setting to simplify the presentation
  of the algorithms and the theorems. The scalings $\dt$ and $\sdt$ of the drift and the noise terms
  correspond to discretizing the SDE with the time step $\dt$. However, both the algorithms and theorems can
  be extended to more general cases.  The real-valued immediate reward is denoted by $r(s,s')$ for
  $s,s'\in\S$, and the discount factor $\g$ is in $(0,1)$. 
\item In the {\em discrete state space} setting, $\S=\{0,1,\ldots,n-1\}$. Given a policy, the
  transition matrix $P(s,s')$ that represents the probability from state $s$ to $s'$
  \begin{equation*}
    P(s,s') = P(\stp = s'| \st = s)
  \end{equation*}
  is assumed to vary slowly in both $s$ and $s'$. The function $r(s, s')$ denotes the immediate
  reward function from state $s$ to $s'$, under the current policy. Finally, the discount rate $\g$
  is between $0$ and $1$.
\end{itemize}

Since we only consider the prediction problem here, the policy is considered fixed for the rest of
the paper. With these notations, the value function $V(s)$ is the expected discounted return if the
policy is followed from state $s$,
\begin{equation}
  \label{eq: discrete St}
  V(s) = \E\left[\sum_{m=0}^\infty \g^tr(s_m,s_{m+1})|s_0 = s\right].
\end{equation}
Let $R(s) = \E[ r(s_m,s_{m+1})|\st = s]$ be the immediate reward under the fixed policy, and $\T$ be
the Bellman operator defined as
\begin{equation}
  \label{eq: discrete St}
  \T V(s) = R(s)+\g\E[ V(\stp) |\st = s].
\end{equation}
The value function $V(s)$ is the fixed point of the operator $\T$.

\subsection{Main ideas and results}
Let us consider approximating the value function in a parameterized form $V(s,\th)$ in model-free
RL. Here, the value function approximation can either be linear or nonlinear with respect to the
parameter $\th$. One way for computing the optimal parameter $\thst$ is to perform gradient descent
to the so-called mean-square Bellman residual
\begin{equation}
  \label{eq: obj fcn RL}
  \min_\th\  \E\l(\E \l[R(\st) + \g V(\stp;\th) - V(\st; \th)|\st\r]\r)^2.
\end{equation}
This approach is thus called the Bellman residual minimization (BRM) in the literature. The
stochastic gradient descent of BRM is based on an unbiased gradient estimation of the objective
function \eqref{eq: obj fcn RL}, which requires two independent transitions from $\st$ to $\stp$.
However, for model-free RL, one does not know explicitly how the environment interacts with the
agent. That is to say, besides observing the agent's trajectory $\{\st\}_{m=0}^T$ under the
given policy, one cannot generate $\stp$ from $\st$ because one does not know the drift and
diffusion in (\ref{eq: discrete St 0}) explicitly. Since the trajectory $\{\st\}_{m=0}^T$ provides
only one simulation from $\st$ to $\stp$, there is no direct means to generate the second copy. This
is the so-called double sampling issue.

The main idea of this paper is to alleviate the double sampling issue by {\em borrowing randomness
  from the future}: instead of requiring a new copy of $\stp'$ starting from $\st$, we approximate
it using the difference between $\stpt$ and $\stp$
\[
\stp' \approx \st + (\stpt - \stp).
\]
When the derivatives of the drift and diffusion terms are under control, the difference between $\dst$
and $\dstp$ is small, which makes the new $\stp'$ statistically close to the distribution of the
true next state. We refer to the algorithm based on this idea the BFF algorithm, where BFF is
short for ``borrowing from the future''.

Before diving into the algorithmic details, let us first summarize the main properties of the BFF
algorithm. When the derivatives of the implicit drift and diffusion terms are small, we are able to
show, in the continuous state space setting,
\begin{itemize}
\item The difference between the biased objective function in the BFF algorithm and the true
  objective function is only $O(\dt^2)$ (Lemma \ref{lemma: eps est});
\item The equilibrium distribution of $\th$ of the BFF algorithm differs from the unbiased SGD
  within an error of order $O(\frac{\dt^2}{\eta})$ (Theorem \ref{thm: p_infty});
\item The evolution of $\th$ of the BFF algorithm differs statistically from the unbiased SGD only
  within an error of $O(1+\frac{\dt^2}{\eta})O(\dt^2)$ (Theorem \ref{thm: p_t}).
\end{itemize}
Here $\eta$ is the ratio of the learning rate over the batch size. Note that in order to have the
error under control, $\eta$ cannot be too small. Intuitively, this is because: the algorithm
minimizes a slightly biased objective function, so if the optimization is done without any
randomness, the solution will have little overlap with the exact one.

\tcb{Although the theoretical results only concern with the first case, we verify numerically in
  Section \ref{sec: numerics} that the proposed algorithms perform well also in the discrete state
  space setting. }

\subsection{Discussion on related work}
\label{sec: compare}

There are two other ways commonly used to solve the BRM problem. One is the SCGD method proposed in
\citep{wang2016accelerating}, the goal of which is to solve the minimization problem in the form of
\begin{equation*}
    \min_{\th} \E_v[f_v( \E_\o[g_\o(\th)])].
\end{equation*}
When $V(s;\th): \S \to \R^{|\S|}$ is in the vector form, one could set $g_{\stp} = R(\st) + \g V(\stp) - V(\st)$ as a vector function in $\R^{|\S|}$,  and $f = \sum_{i}g_i^2$.  Then the above minimization problem automatically becomes
\begin{equation*}
    \min_{\th} \E \lv \E[ R(\st) + \g V(\stp) - V(\st) |\st]  \rv^2,
\end{equation*}
which gives the fixed point of the Bellman operator $\T$. According to Algorithm 1 in \citep{wang2017stochastic}, the basic SCGD updates the parameter $\th$ in the following way,
\begin{equation}
  \label{algo: GTD}
  \begin{aligned}
    &y_{k+1} = y_k + \beta \l(R(\st) + \g V(\stp ;\th_k) - V(\st;\th_k) - y_k\r);\\ &\th_{k+1} =
    \th_k - \tau \l(\g\nb_\th V(\stp ;\th_k) -\nb_\th V(\st;\th_k) \r)y_{k+1}.
  \end{aligned}
\end{equation}
where $y\in\R^{|\S|}$ is a vector. 

When the state space $\S$ is continuous, $V(s;\th)\in \R$ is usually represented as a scalar function. Note that simply set $g = R(\st) + \gamma V(\stp) - V(\st)$ and $f = g^2$ does not necessarily give the fixed point of the Bellman operator because there is no conditional expectation of $\stp$ on $\st$ in the objective function. So it is less clear  how to apply SCGD to the case of the continuous state space.


\color{black}
The second method is based on the primal-dual formulation. The GTD method proposed in
\citep{sutton2008convergent} is later shown in \citep{liu2015finite} to a primal-dual algorithm for
the mean-squared temporal difference minimization problem. The primal-dual representation of the
objective function \eqref{eq: obj fcn RL} is
\begin{equation*}
  \min_\th\max_{y\in\R^{|\S|}}  \E\l[ \l(\E\l[R(\st) + \g V(\stp;\th) - V(\st; \th)|\st\r]\r) y(\st) - \frac{1}{2}y(\st)^2 \r].
\end{equation*}
It is easy to check that for a fixed $\th$, the maximum over $y$ gives the same objective function
\eqref{eq: obj fcn RL}. The SGD method for the above minimax problem is \eqref{algo: GTD}, which is the same as SCGD.

However, when the state space is continuous, $y(\cdot)$ usually is represented by a nonlinear
function, then the minimax problem becomes
\begin{equation}
  \label{obj nn pd}
  \min_\th\max_{\o\in\R^{|\S|}}  \E\l[ \l(\E\l[R(\st) + \g V(\stp;\th) - V(\st; \th)|\st\r]\r) y(\st;\o) - \frac{1}{2}y(\st;\o)^2 \r],
\end{equation}
which makes the maximum problem non-concave. Solving the maximum problem over $\o$ by (stochastic)
gradient descent will not necessarily give the objective function \eqref{eq: obj fcn RL}, which
brings in more instability to the performance of the primal-dual algorithm. (See Figure \ref{fig:
  eg1_compare} in Section \ref{sec: cont}. )

To summarize, when compared with the other methods, BFF has an advantage for continuous state space
model-free RL problems. BFF also gives comparable results in discrete state space with the above
methods mentioned.  However, BFF requires the smoothness assumption of the underlying dynamics,
while such an assumption is not necessary for applying the primal-dual method or the SCGD.

\color{black}

\section{Algorithms}
\label{sec: algo}

This section describes the BFF algorithms in both the nonlinear approximation setting and the
tabular setting.

\subsection{Algorithm with nonlinear approximation}
Let us write the BRM (\ref{eq: obj fcn RL}) in an abstract form,
\begin{equation}
  \label{eq: obj fcn}
  \thst = \min_{\th \in \O} \J(\theta),\quad
  \J(\theta) := \E \l[ \frac{1}{2}\l(\E\l[f(\st, \stp; \th)|\st\r]\r)^2\r],
\end{equation}
where $\O\subset\R^d$ is a compact domain, and
\[
f(\st,\stp;\th) = R(\st) + \g V(\stp;\th) - V(\st;\th)
\]
is the Bellman residual (sometimes also called Bellman error in the literature). Note that, when $V$
is approximated by a neural network with standard activation functions, the boundedness of $\th$ and
$\st$ implies that $V$ is also bounded. Following \eqref{eq: discrete St 0}, define
\begin{equation}
\label{eq: def of ds}
\dst := \stp-\st =  a(\st)\dt+\s(\st)\sdt Z_m.
\end{equation}


Suppose now that functions $\a(s)$ and $\s(s)$ were known explicitly. SGD with uncorrelated samples
at each state updates the parameter $\th$ as follows
\begin{abc}  \label{algo: uncor}
  {\bf [Uncorrelated sampling] }Given a trajectory $\{s_m\}_{m=0}^T$, at step $k$, randomly
  select $M$ elements from $\{0, \cdots, T\}$ to form the index subset $B_k$, generate a new
  $\stp'$ from $\st$ according to \eqref{eq: discrete St 0}, and update
  \begin{equation*}
    \th_{k+1} = \th_k - \frac{\tau}{M}\sum_{m\in B_k}\f(\st, \stp; \th_k)\nb_\th \f(\st, \stp';
    \th_k),
  \end{equation*}
  where $\tau$ is the learning rate, and $M$ is the batch size.
\end{abc}
However, as we pointed out already, generating another $\stp'$ is impractical as $\a(s)$ and $\s(s)$
are unknown in the model-free setting. Instead, the following {\em Sample-cloning} algorithm is
sometimes used.
\begin{abc}  \label{algo: cor}
  {\bf [Sample-cloning] } Given a trajectory $\{s_m\}_{m=0}^T$, at step $k$ with $\tau, B_k, M$ the
  same as in Algorithm \ref{algo: uncor}, update
  \begin{equation*}
    \th_{k+1} = \th_k - \frac{\tau}{M}\sum_{m\in B_k}\f(\st, \stp; \th_k)\nb_\th \f(\st, \stp;
    \th_k).
  \end{equation*}
\end{abc}
Note that the gradient in the above algorithm is not an unbiased gradient of the objective function
in (\ref{eq: obj fcn}). In fact, it is an unbiased gradient of
$\E\l[\frac{1}{2}\E\l[f(\st,\stp;\th)^2|\st\r]\r]$. We shall see in Section \ref{sec: numerics} that
this algorithm fails to identify the true solution $\thst$ even if the underlying drift and
diffusion terms are smooth.

\paragraph{Borrow from the future.}
Below we propose two algorithms that approximate the minimizer efficiently when the underlying drift
term $\a(s)$ and diffusion term $\s(s)$ change smoothly. Instead of minimizing $J(\th)$, the first
algorithm minimizes $\hJ(\th)$,
\begin{equation}  \label{eq: hat obj fcn}
  \min_{\th \in \O} \hJ(\th),\quad
  \hJ(\th):= \frac{1}{2}\E \l[ \E\l[f(\st, \stp; \th)|\st\r]\E\l[f(\st, \st+\dstp; \th)|\st\r]\r]
\end{equation}
where $\dstp$ is defined as (\ref{eq: def of ds}). The main idea is to approximate $\stp'=\st+\dst$
in Algorithm \ref{algo: uncor} with $\stp' \approx \st + \dstp$, i.e., creating another simulation
of $\st \to \stp$ by borrowing randomness from the future step $\stp \to \stpt$. When $\dt$ is
small, and the change of the drift and the diffusion are small as well, we expect the approximation
should be close to the unbiased gradient. Due to the independence between $\dst$ and $\dstp$, we
have
\begin{equation}  \label{eq: hat obj fcn 2}
  \hJ(\th) = \frac{1}{2}\E\l[f(\st, \stp; \th)f(\st, \st+\dstp;\th)\r].
\end{equation}
From \eqref{eq: hat obj fcn 2}, one can directly apply SGD algorithm to update the parameter $\th$
from the observed trajectory $\{s_m\}_{m=0}^T$.

\begin{abc}  \label{algo: shiftonloss}
  {\bf [BFF-loss] }Given a trajectory $\{s_m\}_{m=0}^T$, at step $k$, $\tau, B_k, M$ are the same as
  in Algorithm \ref{algo: uncor}
  \begin{equation*}
    \begin{aligned}
      \th_{k+1} = \th_k -\frac{\tau}{M}\sum_{m\in B_k} \frac{1}{2}\nb_\th \l[\f(\st, \stp; \th_k) \f(\st, \st+\dstp; \th_k)\r] ,
    \end{aligned}
  \end{equation*}
  where $\dstp = \stpt - \stp$.
\end{abc}

An alternative algorithm is applying the same technique directly on the unbiased gradient of the
true objective function. We will show in Section \ref{sec: numerics} that the two new algorithms
behave similarly in practice.
\begin{abc}  \label{algo: shiftongd}
  {\bf [BFF-gradient] } Given a trajectory $\{s_m\}_{m=0}^T$, at step $k$, $\tau, B_k, M$ are the same as in Algorithm \ref{algo: uncor}
  \begin{equation*}
    \th_{k+1} = \th_k - \frac{\tau}{M}\sum_{m\in B_k} \f(\st, \stp; \th_k) \nb_\th\f(\st, \st+\dstp;
    \th_k),
  \end{equation*}
  where $\dstp = \stpt - \stp$.
\end{abc}

Although these new BFF algorithms are biased when compared with Algorithm \ref{algo: uncor}, Section
\ref{sec: thm pf} proves that the bias of Algorithm \ref{algo: shiftonloss} is small. More
specifically, we show that the differences between Algorithms \ref{algo: uncor} and \ref{algo:
  shiftonloss} are all under control, in terms of the objective function, the evolution of SGD, and
the steady-state distribution.


\subsection{Algorithms in the tabular setting}
\label{sec: tabular}
For tabular form, the value function $\vst\in\R^{n}$
satisfies the following Bellman equation,
\begin{equation*}
  \vst = \T(\vst) = r +\g P \vst.
\end{equation*}
In the discrete setting, the BRM becomes
\begin{equation*}
  \vst = \argmin_{v\in\R^{n}}\frac{1}{2} \ll r +\g P v - v\rl_\mu^2,
\end{equation*}
where $\mu$ is the stationary distribution of the Markov chain. The gradient of the above objective
function can be written as
\begin{equation}  \label{eg2: nb J 1}
  \nb_v J = (\g P - I)^\top \diag(\mu) ( r + \g P v - v).
\end{equation}
Since $P$ appears twice in the above formula, in order to obtain an unbiased approximation of the
above gradient, we need two simulations from $\st$ to $\stp$. Given a trajectory $\{\st\}_{m=1}^T$,
choose a state $\st = i$ and the next state $\stp = j$. Assuming that the Markov chain reaches
equilibrium, such a choice is an unbiased estimate for $\diag(\mu)$. If the transition matrix $P$
were known, we could generate a new state $\stp' = s_{j'}$ and construct an unbiased estimation of
$\nb_v J$: replacing the first and second copies of $P$ in \eqref{eg2: nb J 1} with $P_1$ and $P_2$
given as follows:
\[
(P_1)_{il} =
\begin{cases}
  1,  &l=j'\\
  0,  &\text{otherwise}
\end{cases},
\quad
(P_2)_{il} =
\begin{cases}
  1, &l=j\\
  0, &\text{otherwise}
\end{cases}.
\]
Equivalently, the unbiased estimate of the gradient can be written as
\begin{equation}
  \label{eg2: unbiased nb J}
  (\nb_v J)_i = -(r_i + \g v_j-v_i),\quad
  (\nb_v J)_{j'} = \g(r_i + \g v_j-v_i),\quad
  (\nb_v J)_l = 0, \qd \forall l \neq i, j'.
\end{equation}

However, in the setup of model-free RL, without knowing the transition matrix $P$, one needs to
approximate $\vst$ based on the observed trajectory $\{\st\}_{m=1}^{T}$ alone. The BFF idea $\stp'
\approx \st+(\stpt-\stp)$ can also be applied here to give rise to the two BFF algorithms in the
tabular form. Below are the pseudocodes for Algorithms \ref{algo: uncor}-\ref{algo: shiftongd} in
the tabular form, where $v$ is updated based on an estimate $G$ of the true gradient $\nb_v J$,
\begin{equation*}
  v_{k+1} = v_k - \eta G_{m_k},
\end{equation*}
where $m_k$ is randomly selected from $\{1, \cdots, T\}$ and $G_m$ (dropping the $k$ index for
notation convenience) is computed differently as follows in the four algorithms.
\begin{itemize} 
\item {\bf Uncorrelated sampling:} Assume $\st = i, \stp = j$, (same for the other three
  algorithms). Generate a new $\stp'$ by (\ref{eg2: transition}) and let $j' = \stp'$
  \begin{equation}
    \label{algo: uncorrelated}
	(G_m)_i =  -(r_i + \g v_j-v_i), \qd (G_m)_{j'} =  \g(r_i + \g v_j-v_i), \qd (G_m)_l = 0, \qd \forall l \neq i, j.
  \end{equation}
  The uncorrelated sampling algorithm gives an unbiased estimation of the loss function (\ref{eg2:
    nb J 1}). However, it is impractical for model-free RL.
\item {\bf Sample-cloning:} 
  \begin{equation}
    \label{algo: correlated 2}
	(G_m)_i =  -(r_i + \g v_j-v_i), \qd (G_m)_j =  \g(r_i + \g v_j-v_i), \qd (G_m)_l = 0, \qd \forall l \neq i, j.
  \end{equation}
\item {\bf BFF-gradient:} Let $j' = \st + (\stpt - \stp)$. 
  \begin{equation}
    \label{algo: shift on gd}
	(G_m)_i =  -(r_i + \g v_j-v_i), \qd (G_m)_{j'} =  \g(r_i + \g v_j-v_i), \qd (G_m)_l = 0, \qd \forall l \neq i, j.
  \end{equation}
\item  {\bf BFF-loss:} Let $j' = \st + (\stpt - \stp)$. 
  \begin{equation}
    \label{algo: shift on loss}
    \begin{aligned}
	&(G_m)_i =  -\frac12(r_i + \g v_j-v_i) - \frac12(r_i + \g v_{j'}-v_i), \qd (G_m)_{j'} =  \frac\g2(r_i + \g v_j-v_i), \qd\\
	  &(G_m)_{j} =  \frac\g2(r_i + \g v_{j'}-v_i), \qd (G_m)_l = 0, \qd \forall l \neq i, j, j'.
    \end{aligned}
  \end{equation} 
\end{itemize}


\section{Error estimates}\label{sec: thm pf}

The aim of this section is to show that Algorithm \ref{algo: shiftonloss} (BFF-loss) \tcb{for the
  continuous state space RL with underlying dynamics \eqref{eq: discrete St 0}} is close to
Algorithm \ref{algo: uncor} (uncorrelated sampling) statistically.

\subsection{Difference between the objective functions}
Let us introduce
\begin{equation}
\label{eq: def of J}
\e(\th) :=\hJ(\th)-\J(\th).
\end{equation}
Notice that $\e(\th) = \E \up(\st;\th)$ with
\begin{equation}\label{eq: def of eps}
  \up(\st;\th) = \E\l[f(\st, \stp; \th)|\st\r]\E\l[f(\st,\st+\dstp; \th) - f(\st, \st+\dst; \th)|\st \r].
\end{equation}
The following lemma shows that if the values and derivatives of the drift, diffusion, and nonlinear
approximation are bounded, then the difference between the two objective functions $J(\th)$ and
$\hJ(\th)$ is smaller than $C\dt^2$, with the constant depending only on the size of $\a, \s, f$ and
their derivatives until the second order.

\begin{lemma}
\label{lemma: eps est}
For $\e, \up$ defined in (\ref{eq: def of J}), (\ref{eq: def of eps}), if $\ll \a^{(k)}(\cdot) \rl_{L^\infty}, 0\leq k
\leq 3, \ll\s^{(l)}(\cdot) \rl_{L^\infty}, 0\leq l\leq 4$ are bounded, and
$\ll\pt^i_{s_2}f(s_1,s_2;\th)\rl_{L^\infty_{s_1,s_2}}, 0\leq i \leq 5$ are also uniformly bounded
for any $\th$, then for all $\th$, one has
\begin{equation}  \label{eq: eps est}
  \begin{aligned}
    &\ll \up(s, \th)\rl_{L^\infty_s} \leq C\dt^2+o(\dt^2), \\
    &\e(\th) \leq C\dt^2+o(\dt^2),    
  \end{aligned}
\end{equation}
for some constant $C$ depending on $\ll \a^{(i)}(\cdot) \rl_{L^\infty}, \ll \s^{(i)}(\cdot)
\rl_{L^\infty}, \ll \pt^i_{s_2}f(s_1,s_2;\th)\rl_{L^\infty_{s_1,s_2,\th}}, 0\leq i\leq 2$.
\end{lemma}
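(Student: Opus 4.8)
\emph{Proof proposal.}
I would first reduce the claim to a pointwise estimate. Writing
\[
D(\st;\th):=\E\l[\f(\st,\st+\dstp;\th)-\f(\st,\stp;\th)\mid\st\r]
\]
and using $\stp=\st+\dst$, definition \eqref{eq: def of eps} reads $\up(\st;\th)=\E[\f(\st,\stp;\th)\mid\st]\cdot D(\st;\th)$. The first factor is bounded by $\ll\f\rl_{L^\infty}$, which is finite (the $i=0$ case of the hypothesis), so it suffices to prove $\ll D(\cdot;\th)\rl_{L^\infty}\le C\dt^2+o(\dt^2)$ uniformly in $\th$; both lines of \eqref{eq: eps est} then follow, the second because $\e(\th)=\E\,\up(\st;\th)\le\ll\up(\cdot;\th)\rl_{L^\infty}$.

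Fix $\st,\th$ and set $\psi(y):=\f(\st,y;\th)$, so that $D(\st;\th)=\E[\psi(\st+\dstp)\mid\st]-\E[\psi(\st+\dst)\mid\st]$; the plan is to expand both conditional expectations to order $\dt^2$. For the second term, $\dst=\a(\st)\dt+\s(\st)\sdt Z_m$ with $Z_m$ standard Gaussian, so a fourth-order Taylor expansion of $\psi$ about $\st$ together with the vanishing of the odd moments of $Z_m$ gives
\[
\E[\psi(\st+\dst)\mid\st]=\psi(\st)+\dt\,\mathcal A_\st\psi+\dt^2\,\mathcal B_\st\psi+o(\dt^2),
\]
where $\mathcal A_\st\phi=\a(\st)\cdot\nb\phi(\st)+\tfrac12\s\s^\top(\st):\nb^2\phi(\st)$ and $\mathcal B_\st\phi$ is a fixed expression in $\a(\st),\s(\st)$ and the derivatives of $\phi$ up to order $4$; note there is no $\dt^{3/2}$ term. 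For the first term I would condition on $\stp$ first: given $\stp$, $\dstp=\a(\stp)\dt+\s(\stp)\sdt Z_{m+1}$ with $Z_{m+1}$ standard Gaussian independent of $(\st,\stp)$, so the same expansion (with $\a,\s$ at $\stp$ but the derivatives of $\psi$ still at $\st$) followed by $\E[\,\cdot\mid\st]$ and the analogous expansions $\E[\a(\stp)\mid\st]=\a(\st)+\dt\,\mathcal A_\st\a+o(\dt)$, $\E[\s\s^\top(\stp)\mid\st]=\s\s^\top(\st)+\dt\,\mathcal A_\st(\s\s^\top)+o(\dt)$, and $\E[\mathcal B^{\stp}_\st\psi\mid\st]=\mathcal B_\st\psi+o(1)$, yields
\[
\E[\psi(\st+\dstp)\mid\st]=\psi(\st)+\dt\,\mathcal A_\st\psi+\dt^2\l(\mathcal B_\st\psi+(\mathcal A_\st\a)\cdot\nb\psi(\st)+\tfrac12(\mathcal A_\st(\s\s^\top)):\nb^2\psi(\st)\r)+o(\dt^2).
\]

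Subtracting, the $\psi(\st)$ and $\dt\,\mathcal A_\st\psi$ terms cancel exactly, and --- this is the crux --- the $\dt^2$ terms $\mathcal B_\st\psi$, which carry the only dependence on $\nb^3\psi=\pt^3_{s_2}\f$ and $\nb^4\psi=\pt^4_{s_2}\f$, cancel as well, leaving
\[
D(\st;\th)=\dt^2\l((\mathcal A_\st\a)\cdot\nb\psi(\st)+\tfrac12(\mathcal A_\st(\s\s^\top)):\nb^2\psi(\st)\r)+o(\dt^2).
\]
Since $\mathcal A_\st\a$ involves only $\a,\nb\a,\nb^2\a,\s$; $\mathcal A_\st(\s\s^\top)$ only $\a,\s,\nb\s,\nb^2\s$; and $\nb\psi=\pt_{s_2}\f$, $\nb^2\psi=\pt^2_{s_2}\f$, the $\dt^2$-coefficient is bounded, uniformly in $\st$ and $\th$, by a constant $C$ depending only on the $L^\infty$ norms of $\a,\s,\pt_{s_2}\f$ and their derivatives up to second order --- precisely the claimed dependence. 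The expansions are uniform in $\th$ because the hypothesis bounds $\pt^i_{s_2}\f$ uniformly in $\th$ while $\a,\s$ are $\th$-independent, and the assumed differentiability orders $3,4,5$ for $\a,\s,\f$ are exactly what is needed to write the Lagrange remainders above and conclude they are $o(\dt^2)$. This gives $\ll D(\cdot;\th)\rl_{L^\infty}\le C\dt^2+o(\dt^2)$ and hence the lemma.

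The step I expect to be the main obstacle is the double-layered expansion of the borrowed term $\E[\psi(\st+\dstp)\mid\st]$: one must check carefully that, after averaging over the fresh Gaussian $Z_{m+1}$ and then over $Z_m$, (i) there is no residual $O(\dt)$ discrepancy with $\E[\psi(\st+\dst)\mid\st]$ --- killed by the fact that $\E[\a(\stp)\mid\st]$ and $\E[\s\s^\top(\stp)\mid\st]$ converge to $\a(\st)$ and $\s\s^\top(\st)$ at rate $O(\dt)$, not $O(\sdt)$; (ii) no residual $O(\dt^{3/2})$ term --- killed by the vanishing of the odd moments of the two independent Gaussians; and (iii) the $\mathcal B$-pieces, the only place the higher derivatives $\pt^i_{s_2}\f$, $i\in\{3,4\}$, enter, match between the two expansions and cancel. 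Once this bookkeeping is organized, bounding the Taylor remainders by $o(\dt^2)$ is routine given the assumed $L^\infty$ bounds.
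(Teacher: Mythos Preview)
Your proposal is correct and follows essentially the same route as the paper: reduce to bounding $D(\st;\th)$ (the paper calls it $\d(\st,\th)$), Taylor expand about $\st$, and verify that the contributions of orders $\dt^{1/2},\dt,\dt^{3/2}$ cancel while the $\dt^2$-coefficient depends only on $\a,\s,\pt_{s_2}f$ up to second order. The only organizational difference is that the paper expands the \emph{difference} $f(\st,\st+\dstp)-f(\st,\st+\dst)$ directly and then computes $\E[\dstp^i-\dst^i\mid\st]$ for $i=1,\dots,4$, whereas you expand each expectation separately in generator form and subtract; your observation that the $\mathcal B$-pieces cancel corresponds exactly to the paper's computation that $\E[\dstp^3-\dst^3\mid\st]$ and $\E[\dstp^4-\dst^4\mid\st]$ are $o(\dt^2)$.
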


The boundedness of the residual $f$ is followed by the boundedness of $R$ and $V$. Since we assume
that the state space $\S$ and the parameter space $\O$ are both compact, for parametric value
approximation, such as neural network, it is natural to assume $R,V$ are bounded.

\begin{proof}
See Appendix \ref{proof of eps est}.
\end{proof}

\subsection{Difference between the asymptotic distributions}
In this section and Section \ref{sec: finite diff}, we assume the optimization region of (\ref{eq:
  obj fcn}) is a bounded connected open subset $\O$ in $\R^d$. This assumption is to guarantee the
Poincare inequality. The updates of the parameter $\th$ of $J(\cdot)$ and $\hth$ of $\hJ(\cdot)$ by SGD
according to Algorithms \ref{algo: uncor} and \ref{algo: shiftonloss} can be approximated by
stochastic differential equations (SDEs) with $\eta = \frac{\tau}{M}$
\citep{li2017stochastic,hu2017diffusion}
\begin{equation*}
  \begin{aligned}
    &d\th_t = -\nb J(\th_t) dt + \sqrt{\eta}\Sig^{\frac12}(\th_t) dB_t;\\
    &d\hth_t = -\nb \l(J(\hth_t)+\e(\hth_t)\r) dt + \sqrt{\eta}\l(\Sig(\hth_t) + \Up(\hth_t)\r)^{\frac12} dB_t,
  \end{aligned}
\end{equation*}
where
\begin{equation*}
\begin{aligned}
  &\Sig(\th_t) = \Var\l[\frac12(\E\l[f(\st,\stp;\theta)|\st\r])^2\r];\\ &\Up(\hth_t) =
  \Var\l[\frac12(\E\l[f(\st,\stp;\theta)|\st\r])^2+\up\r] - \Sig(\hth).
\end{aligned}
\end{equation*}
Here $\Var$ represents the variance, and $\up$ is defined in (\ref{eq: def of eps}).

Therefore, the corresponding probability density functions $\p(t,\th)$, $\hp(t,\th)$ of $\th_t$, $\hth_t$ can
be described by \citep{pavliotis2014stochastic}
\begin{align}
    &\pt_tp(t,\th) = \nb\cdot\l[\l(\nb J\r)\p + \frac{\eta}{2}\nb\cdot\l(\Sig\p\r)\r]; \label{eq: eq for p}\\
    &\pt_t\hp(t,\th) = \nb\cdot\l[\l(\nb J + \nb \e\r)\hp + \frac{\eta}{2}\nb\cdot\l((\Sig+ \Up)\hp\r)\r], \label{eq: eq for hp}
\end{align}
with the same initial data $p(0,\th) = \hp(0,\th)$.
We use the reflecting boundary condition on $\pt \O$, 
\begin{equation}
\label{eq: ref bdy cond}
\begin{aligned}
    &\l.\l(\nb J p + \frac{\eta}{2}\nb\cdot(\Sig p) \r)\cdot \vec{n} \r\vert_{\pt\O}= 0,\\
    &\l.\l((\nb J+\nb \e) \hp + \frac{\eta}{2}\nb\cdot(\Sig + \Up \hp) \r)\cdot \vec{n} \r\vert_{\pt\O}= 0,\\
\end{aligned}
\end{equation}
which means that $\th$ will be reflected after hitting the boundary.

Besides, from the estimation we obtained in (\ref{eq: eps est}), we know that $\nb\e \leq O(\dt^2)$, and it is easy to see that $\Up \leq O(\dt^2)$ because
\begin{equation}
\label{eq: bd of Up}
\begin{aligned}
    \Up(\hth) =& \Sig(\hth) + \Var[\up] + \E\l[\up(\E\l[f(\st,\stp;\theta)|\st\r])^2\r] -\E\l[\up\r]\E\l[(\E\l[f(\st,\stp;\theta)|\st\r])^2\r] - \Sig(\hth) \\
    \leq&  O(\dt^4) + C(\hth)\dt^2  \leq C\dt^2 + o(\dt^2).
\end{aligned}
\end{equation}

\begin{assumption}
\label{ass: J hJ}
We assume both loss functions $J(\th)$ and $\hJ(\th)$ satisfy the following:
\begin{itemize}
    \item [-] $\ds \int e^{-J(\th)} d\th \leq \infty,$ and $\ds \int e^{-\hJ(\th)} d\th \leq \infty$.
    \item [-] The Frobenius norms of $G = \ll H(J) \rl_{L^\infty_\th}$ and $\h{G} =  \ll H(\hJ) \rl_{L^\infty_\th}$ are bounded by a constant $M$, where $H$ represents the Hessian and $L^\infty_\th$ is taken element-wisely to the matrix. 
\end{itemize}
\end{assumption}
The first assumption is to guarantee that the steady-state is well defined. The second assumption is
used to prove the boundedness of $\nb \hp$ .

\begin{theorem}
\label{thm: p_infty}
Assume $\Sig \sim O(1),\Up$ are both constants, then there exist steady-state distributions $\pinf,
\hpinf$ for (\ref{eq: eq for p}), (\ref{eq: eq for hp})
\begin{equation*}
    \pinf(\th) = \frac{1}{Z}e^{-\b J(\th)}, \qd \hpinf(\th) =
    \frac{1}{\hZ}e^{-\hb (J(\th)+\e(\th))}, \qd \b = \frac{2}{\eta\Sig}, \qd \hb = \frac{2}{\eta(\Sig + \Up)},
\end{equation*}
where $Z = \int e^{\frac{-2J}{\eta\Sig}} d\th$, $\hZ = \int e^{\frac{-2(J+\e)}{\eta(\Sig+\Up)}}
d\th$ are normalized constants. In addition,
\begin{equation*}
    \ll \frac{\hpinf}{\pinf} \rl_{L^\infty} \leq 1+O\l(\frac{\dt^2}{\eta}\r).
\end{equation*}
\end{theorem}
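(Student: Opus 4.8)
The plan is to verify directly that the stated $\pinf,\hpinf$ are stationary solutions of the Fokker--Planck equations \eqref{eq: eq for p}, \eqref{eq: eq for hp}, and then to control the ratio $\hpinf/\pinf$. For the first part, since $\Sig$ and $\Up$ are assumed constant, the right-hand side of \eqref{eq: eq for p} in steady state becomes $\nb\cdot\l[(\nb J)\pinf + \frac{\eta\Sig}{2}\nb\pinf\r]$, and plugging in $\pinf = \frac1Z e^{-\b J}$ with $\b = \frac{2}{\eta\Sig}$ gives $\nb\pinf = -\b(\nb J)\pinf$, so the bracket is $(\nb J)\pinf - \frac{\eta\Sig}{2}\b(\nb J)\pinf = 0$; hence the divergence vanishes and the reflecting boundary condition \eqref{eq: ref bdy cond} is satisfied since the flux itself is zero. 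The identical computation with $J$ replaced by $J+\e$ and $\Sig$ by $\Sig+\Up$ shows $\hpinf = \frac1{\hZ}e^{-\hb(J+\e)}$ is stationary for \eqref{eq: eq for hp}. Integrability of both densities is exactly the first bullet of Assumption~\ref{ass: J hJ} (up to the positive prefactors $\b,\hb$, which only rescale $J$), so $Z,\hZ$ are finite and the normalized stationary distributions exist.

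For the ratio bound, I would write
\[
\frac{\hpinf(\th)}{\pinf(\th)} = \frac{Z}{\hZ}\,e^{-\hb(J(\th)+\e(\th)) + \b J(\th)}
= \frac{Z}{\hZ}\,\exp\l((\b-\hb)J(\th) - \hb\,\e(\th)\r).
\]
The key is that the exponent is uniformly small. From Lemma~\ref{lemma: eps est} we have $\|\e\|_{L^\infty} \le C\dt^2 + o(\dt^2)$, and from \eqref{eq: bd of Up} we have $|\Up| \le C\dt^2 + o(\dt^2)$, so
\[
\b - \hb = \frac{2}{\eta}\l(\frac1\Sig - \frac1{\Sig+\Up}\r) = \frac{2}{\eta}\cdot\frac{\Up}{\Sig(\Sig+\Up)} = O\l(\frac{\dt^2}{\eta}\r),
\]
using $\Sig \sim O(1)$ and $\Up$ small. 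Since $J$ is bounded on the compact domain $\O$ (it is nonnegative and, being built from the bounded residual $f$, bounded above) and $\hb = O(1/\eta)$, we get $(\b-\hb)J = O(\dt^2/\eta)$ and $\hb\,\e = O(\dt^2/\eta)$; thus the exponent is $O(\dt^2/\eta)$ uniformly in $\th$, so $\exp(\cdot) = 1 + O(\dt^2/\eta)$ pointwise. It remains to show the prefactor $Z/\hZ$ is also $1 + O(\dt^2/\eta)$: writing $\hZ = \int e^{-\hb(J+\e)} = \int e^{-\b J}\exp\l((\b-\hb)J - \hb\e\r)$ and using the same pointwise bound $\exp((\b-\hb)J - \hb\e) = 1 + O(\dt^2/\eta)$ inside the integral, $\hZ = Z(1 + O(\dt^2/\eta))$, hence $Z/\hZ = 1 + O(\dt^2/\eta)$. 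Combining, $\|\hpinf/\pinf\|_{L^\infty} \le 1 + O(\dt^2/\eta)$.

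The main subtlety, rather than obstacle, is bookkeeping the dependence on $\eta$: one must track that $\b$ and $\hb$ are both of size $1/\eta$ and that it is the \emph{difference} $\b-\hb$ that carries the extra factor $\dt^2$, so that $\hb\e$ and $(\b-\hb)J$ end up the same order $\dt^2/\eta$ rather than $\dt^2/\eta^2$. One also needs the boundedness of $J$ on $\O$ explicitly — this is where the compactness of $\O$ together with the boundedness of $R$ and $V$ (hence of $f$, hence of $J = \frac12\E[(\E[f|\st])^2]$) is used — and one should note that the additive constant in $J$ is irrelevant since it cancels between numerator, denominator, and the normalizing constants. Everything else is the routine Gibbs-measure computation sketched above.
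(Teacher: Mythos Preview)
Your proposal is correct and follows essentially the same route as the paper's proof: write $\hpinf/\pinf = (Z/\hZ)\exp\!\big((\b-\hb)J - \hb\e\big)$, bound the exponent by $O(\dt^2/\eta)$ using Lemma~\ref{lemma: eps est} and \eqref{eq: bd of Up}, and then handle $Z/\hZ$ by the same expansion inside the integral. Your added verification that the Gibbs densities are stationary for the Fokker--Planck equations, and your explicit invocation of the boundedness of $J$ on the compact domain $\O$, are details the paper leaves implicit but are exactly what is needed.
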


Theorem \ref{thm: p_infty} implies the following:
\begin{itemize}
\item [-] If the probability of the unbiased SGD (Algorithm \ref{algo: uncor}) converging to the
  optimal $\thst$ is $p$, then the probability of Algorithm \ref{algo: shiftonloss} is
  $\l(1+O(\frac{\dt^2}{\eta})\r)p$.
\item [-] In order to make Algorithm \ref{algo: shiftonloss} behaves similarly to the unbiased SGD,
  we have to let $\frac{\dt^2}{\eta}$ be small, which means we require $\dt$ to be small, but $\eta$
  to be larger than $\dt^2$. This makes sense because we are minimizing a biased objective function,
  so if we do the biased SGD too carefully, it will end up a worse minimizer of the true objective
  function.
\end{itemize}

\begin{proof}
See Appendix \ref{proof of p_infty}.
\end{proof}

\subsection{Difference between finite time distributions}\label{sec: finite diff}

Theorem \ref{thm: p_infty} is about the asymptotic behavior of the algorithm. Now we will study the
difference between the two algorithms at a finite time. The following Poincare inequality of the
probability measure $d\mu = \pinf d\th$ or $d\mu = \hpinf d\th$ in a bounded connected domain is valid
for any $\int h d\mu = 0$,
\begin{equation}
    \label{eq: poincare ineq}
    \int_\O \lv\nb h\rv^2d\mu \geq \lam\int_\O h^2 d\mu,
\end{equation}
with a constant $\lam$ depending on $d\mu$ and $\O$.  Based on the above Poincare inequality, we can
prove the difference between the two algorithms, as shown in the following theorem.  The difference
is measured in the following norm,
\begin{equation}
  \label{def of star norm}
  \ll h \rl^2_\stt = \int h^2 \frac{1}{\pinf} d\th,
\end{equation}
where $\pinf$ is defined in Theorem \ref{thm: p_infty}. 

\begin{theorem}
\label{thm: p_t}
Under Assumption \ref{ass: J hJ} and the assumptions in Theorem \ref{thm: p_infty}, one has,  
\begin{equation}
  \label{eq: p_t}
    \ll \p(t,\th) - \hp(t, \th)\rl_\stt^2 \leq \l(1+O\l(\frac{\dt^2}{\eta}\r)\r)O(\dt^2).
\end{equation} 
\end{theorem}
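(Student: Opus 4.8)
The plan is to compare the two Fokker--Planck evolutions \eqref{eq: eq for p} and \eqref{eq: eq for hp} by tracking the difference $e(t,\th) := \p(t,\th) - \hp(t,\th)$, which starts at $e(0,\cdot) = 0$, and to control its growth in the weighted norm $\ll\cdot\rl_\stt$ via the Poincar\'e inequality \eqref{eq: poincare ineq}. First I would rewrite \eqref{eq: eq for p} in the self-adjoint (Fokker--Planck) form $\pt_t\p = \frac{\eta\Sig}{2}\nb\cdot\l(\pinf \nb(\p/\pinf)\r)$, using that $\Sig,\Up$ are constant so that $\nb J\, \p + \frac{\eta}{2}\Sig\nb\p = \frac{\eta\Sig}{2}\pinf\nb(\p/\pinf)$; likewise \eqref{eq: eq for hp} becomes $\pt_t\hp = \frac{\eta(\Sig+\Up)}{2}\nb\cdot\l(\hpinf\nb(\hp/\hpinf)\r)$. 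Subtracting, the equation for $e$ has the form $\pt_t e = \frac{\eta\Sig}{2}\nb\cdot(\pinf\nb(e/\pinf)) + \mathcal{R}(t,\th)$, where the remainder $\mathcal{R}$ collects (i) the $O(\dt^2)$ discrepancy between the two operators coming from $\Up = O(\dt^2)$ and $\nb\e = O(\dt^2)$ (by Lemma~\ref{lemma: eps est} and \eqref{eq: bd of Up}), acting on $\hp$ and its derivatives, and (ii) the difference between $\pinf$ and $\hpinf$, which by Theorem~\ref{thm: p_infty} is $1 + O(\dt^2/\eta)$ in sup-norm.

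Next I would run the standard energy/entropy estimate for $e$ in the $\stt$-norm: compute $\frac{d}{dt}\ll e\rl_\stt^2 = 2\int e\,\pt_t e\,\frac{1}{\pinf}\,d\th$, integrate by parts using the reflecting boundary conditions \eqref{eq: ref bdy cond} (the boundary terms vanish), and obtain a dissipation term $-\eta\Sig\int \pinf\lv\nb(e/\pinf)\rv^2 d\th$ plus the cross term $2\int \mathcal{R}\, e/\pinf\, d\th$. Writing $h := e/\pinf$ and noting $\int e\,d\th = \int \p - \int\hp$ is conserved and equals $0$ (both are probability densities — actually one should be slightly careful here and, if necessary, subtract off the conserved-mass component, which is zero since $e(0,\cdot)=0$), one has $\int h\,d\mu = 0$ for $d\mu = \pinf d\th$, so the Poincar\'e inequality \eqref{eq: poincare ineq} gives $\int\pinf\lv\nb h\rv^2 d\th \geq \lam\int \pinf h^2 d\th = \lam \ll e\rl_\stt^2$. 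Bounding the cross term by Cauchy--Schwarz as $\leq \epsilon\,\lam\ll e\rl_\stt^2 + C_\epsilon \ll\mathcal{R}\rl_\stt^2$ and absorbing the first piece into the dissipation yields a Gr\"onwall inequality $\frac{d}{dt}\ll e\rl_\stt^2 \leq -c\,\eta\lam\,\ll e\rl_\stt^2 + C\ll\mathcal{R}\rl_\stt^2$, whose solution starting from $0$ is bounded by $\frac{C}{c\eta\lam}\sup_t\ll\mathcal{R}\rl_\stt^2$.

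It then remains to estimate $\ll\mathcal{R}\rl_\stt^2$. The terms in $\mathcal{R}$ are products of the $O(\dt^2)$ factors ($\Up$, $\nb\e$, and $\hpinf/\pinf - 1 = O(\dt^2/\eta)$ type quantities) with $\hp$ and its first two derivatives; so one needs uniform-in-time bounds on $\nb\hp$, $\nb^2\hp$ in the relevant weighted norms — this is exactly what the second part of Assumption~\ref{ass: J hJ} (bounded Hessians of $J$ and $\hJ$) is meant to provide, via a parabolic-regularity or maximum-principle argument applied to \eqref{eq: eq for hp}. Collecting, $\ll\mathcal{R}\rl_\stt^2 = O(\dt^4)\cdot\l(1+O(\dt^2/\eta)\r)$, and the overall bound becomes $\ll\p - \hp\rl_\stt^2 \leq \frac{1}{\eta}\,O(\dt^4)\,\l(1+O(\dt^2/\eta)\r)$; one absorbs the $1/\eta$ into the statement's $(1+O(\dt^2/\eta))$ prefactor (or more precisely folds the bookkeeping so the final form reads $(1+O(\dt^2/\eta))O(\dt^2)$ as in \eqref{eq: p_t}, matching the heuristic that $\eta$ must not be too small). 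The main obstacle I anticipate is the last step: getting genuinely uniform-in-time control of $\nb\hp$ and $\nb^2\hp$ so that $\ll\mathcal{R}\rl_\stt^2$ is uniformly $O(\dt^4)$ — the bounded-Hessian assumption handles the drift coefficients, but one still needs the invariant measure $\hpinf$ to be log-concave-like or at least to satisfy the Poincar\'e inequality uniformly in $\dt$, and to propagate smoothness of the initial data through the degenerate-at-the-boundary parabolic equation; handling the reflecting boundary carefully in the derivative estimates is the delicate part.
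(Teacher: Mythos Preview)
Your overall skeleton---energy estimate in the $\stt$-norm, Poincar\'e inequality, Gr\"onwall---matches the paper's proof. The substantive gap is in how you handle the cross term. You propose to bound $2\int \mathcal{R}\,(e/\pinf)\,d\th$ directly by Cauchy--Schwarz as $\epsilon\lam\|e\|_\stt^2 + C_\epsilon\|\mathcal{R}\|_\stt^2$; since your $\mathcal{R}$ is a \emph{divergence}, estimating $\|\mathcal{R}\|_\stt$ forces you to control second derivatives of $\hp$, which you correctly flag as ``the main obstacle'' and never actually establish. The paper sidesteps this entirely. It writes the remainder explicitly as $\nb\cdot\bigl[\nb\e\,\hp + (\tfrac{1}{\hb}-\tfrac{1}{\b})\nb\hp\bigr]$ (no $\hpinf/\pinf$ comparison appears in the remainder itself---that ratio enters only later, through the norm bounds on $\hp$), integrates by parts \emph{before} applying any inequality so that the cross term becomes
\[
-\int\Bigl[\nb\e\,\hp + \bigl(\tfrac{1}{\hb}-\tfrac{1}{\b}\bigr)\nb\hp\Bigr]\cdot\nb\!\left(\frac{e}{\pinf}\right)d\th,
\]
and then uses Young's inequality so that the $\nb(e/\pinf)$ factor is paired with and absorbed by the dissipation term $\int\pinf|\nb(e/\pinf)|^2\,d\th$. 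What survives as forcing is $\tfrac{\dt^2}{2}\bigl(C_1\|\hp\|_\stt^2 + C_2\eta\|\nb\hp\|_\stt^2\bigr)$, requiring only zeroth- and first-order control of $\hp$. Those two bounds are supplied by separate short lemmas: exponential decay of $\hp-\hpinf$ to zero in the $\hst$-norm gives $\|\hp\|_\stt^2\le C\|\hpinf/\pinf\|_{L^\infty}$, and differentiating the Fokker--Planck equation for $\hp$ once in $\th$ and repeating the same energy argument (this is where the Hessian bound in Assumption~\ref{ass: J hJ} is actually used) gives the analogous bound for $\|\nb\hp\|_\stt^2$. No $\nb^2\hp$ estimate is ever needed.

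A secondary issue: your final move of ``absorbing the $1/\eta$ into the prefactor'' is not a valid rewriting---$O(\dt^4/\eta)\bigl(1+O(\dt^2/\eta)\bigr)$ and $\bigl(1+O(\dt^2/\eta)\bigr)O(\dt^2)$ are genuinely different expressions. In the paper's accounting, because the Young's-inequality piece involving $\nb(e/\pinf)$ is absorbed directly by the dissipation rather than routed through $\|e\|_\stt$, the Gr\"onwall forcing comes out as $O(\dt^2)\|\hpinf/\pinf\|_{L^\infty}$, and Theorem~\ref{thm: p_infty} converts this to the stated $\bigl(1+O(\dt^2/\eta)\bigr)O(\dt^2)$ without any stray $1/\eta$ to dispose of.
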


Theorem \ref{thm: p_t} tells us that the evolution of $\th$ in Algorithm \ref{algo: shiftonloss}
differs from the unbiased SGD within an error of $\l(1+O\l(\frac{\dt^2}{\eta}\r)\r)O(\dt^2)$ .


\begin{proof}
See Appendix \ref{proof of p_t}
\end{proof}


\section{Numerical Examples} \label{sec: numerics}

Several numerical examples are presented here to demonstrate the performance of the proposed
algorithms. Recall that the goal of the prediction problem (i.e., policy evaluation) is to
approximate $V(s)$ based on the trajectories.

\subsection{Continuous state space} \label{sec: cont}
Consider a Markov decision process with a continuous state space $\S = \{s\in(0,2\pi]\}$. Suppose
  that the transition probability is prescribed implicitly via the following dynamics
\begin{equation}  \label{eg1: SDE}
  \begin{aligned}
    &\stp = \st+\a(\st)\dt + \s(\st)\sqrt{\dt}Z_\t,\\
    &\a(s) =  2\sin(s)\cos(s), \qd \s(s) = 1+\cos(s)^2,\qd \dt = 0.1.
  \end{aligned}
\end{equation}
The immediate reward function is $R(s) = (\cos(2s)+1)$, and the discount factor $\g$ is $0.9$.

\begin{figure}[htbp]
  \floatconts
      {fig: eg1}
      {\caption{Continuous state space:  approximation with a 3-layer neural network  with batch size 1000.}}
      {\includegraphics[width=0.9\textwidth]{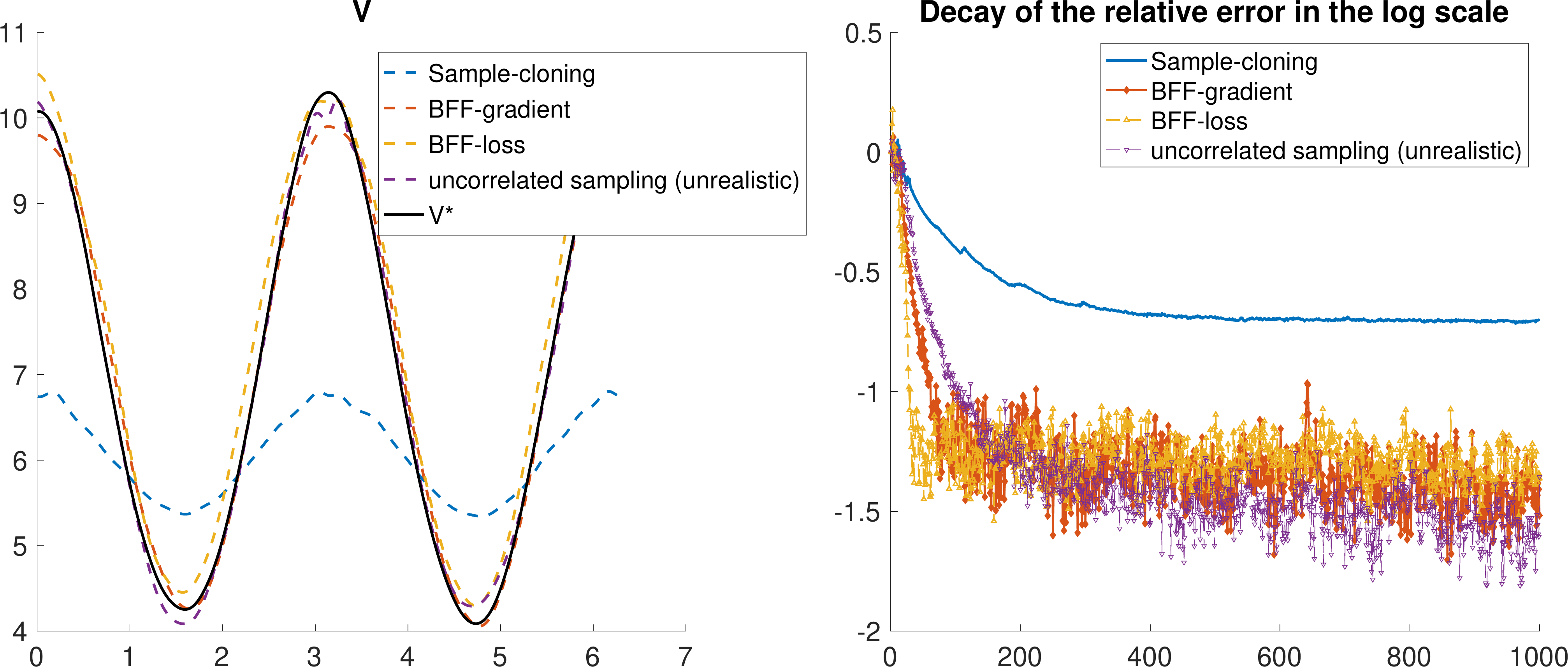}}
\end{figure}

A three-layer, fully connected neural network  is used to approximate the value
function $V(s;\th)$. The network has two hidden layers with $\cos$ as its activation function. Each hidden
layer contains $50$ neurons, i.e.,
\begin{equation}
  \label{eg1: NN}
  \begin{aligned}
    &V(s;\th) = V(x; \{w_i,b_i\}_{i=1}^3) = L_{w_3,b_3}\circ \cos\circ L_{w_2,b_2}\circ \cos\circ L_{w_1,b_1}((\cos s, \sin s)),\\
    &L_{w_i,b_i} (x) = w_ix+b_i, \qd w_i\in\R^{n_{i-1}\times n_i}, \qd b_i\in\R^{n_i},\qd n_0 = 2, n_1 = n_2 = 50, n_3 = 1.
  \end{aligned}
\end{equation}
The optimal $\thst$ is computed by Algorithms \ref{algo: uncor}-\ref{algo: shiftongd} based on a trajectory
$\{\st\}_{m=1}^{10^6}$ generated from \eqref{eg1: SDE}. The function $f$ in Algorithms \ref{algo: uncor}-\ref{algo: shiftongd} refers to
\begin{equation}
\label{f}
f(\st,\stp, \th) = R(\st) +\g V(\stp; \th) - V(\st; \th)
\end{equation}
in the value evaluation. The learning rate $\tau$ and the batch size $M$ are set to be
\begin{equation}
  \label{para}
  \tau = 0.1, \qd M = 1000.
\end{equation}
\tcb{Once the whole trajectory is recorded, we perform a random permutation and use batches of size
  $M=1000$ for training.} In each experiment, the SGD algorithm runs for a single epoch with the
same initialization $\th_0$ and randomly-permuted trajectory. The error $e_k$ at each step $k$ is
defined as the squared $L_2$ norm $\| V(\cdot,\th_k)-\vst(\cdot)\|_2$, where the reference solution
$\vst(s)$ is computed by running Algorithm \ref{algo: uncor} for $10$ epochs based on a longer
trajectory $\{\st\}_{m=1}^{10^7}$, with hyper-parameters $\tau = 0.01$ and $M = 1000$. The left plot
of Figure \ref{fig: eg1} shows the final $V(s, \th)$ obtained by four different methods, while the
relative errors $\log_{10}({e_k}/{e_0})$ in the log scale are summarized in the right plot.  The
plots show that the Sample-cloning algorithm introduces a rather large error while the BFF
algorithms are much closer to the (impractical) uncorrelated sampling algorithm.

\tcb{ Next, We compare the BFF algorithm with the primal-dual algorithm, i.e., the mini-batch SGD
  method for the objective function \eqref{obj nn pd}. The algorithm reads,
\begin{equation}
\label{algo: PD nn}
\begin{aligned}
  &\o_{k+1} =  \o_k + \frac{\beta}M \sum_{\st\in B_k} \l(f(\st,\stp,\th_k)\nb_\o y(\st;\o_k) - y(\st;\o_k)\nb_\o y(\st;\o_k)\r),\\
  &\th_{k+1} = \th_k - \frac{\tau}M \l(\sum_{\st\in B_k}\nb_\th f(\st,\stp,\th_k) y(\st;\o_{k+1}) \r),
\end{aligned}
\end{equation}
with $f,\tau, M$ as in \eqref{f}, \eqref{para} and $\beta = 0.5$. Each simulation uses the same
initialization for $\th_0$ and the same random permutation to the trajectory. The results are
summarized in Figure \ref{fig: eg1_compare}. The five dash curves in Figure \ref{fig: eg1_compare}
are the relative errors of the primal-dual algorithm with five different random initializations of
$\o_0$, and among them, only one simulation converges within $1000$ steps. This is because the
algorithm might be trapped at a suboptimal stationary point in the nonlinear approximation setting.
}

\begin{figure}[htbp]
  \floatconts
      {fig: eg1_compare}
      {\caption{Continuous state space:  approximation with a 3-layer neural network  with batch size 1000.}}
      {\includegraphics[width=0.5\textwidth]{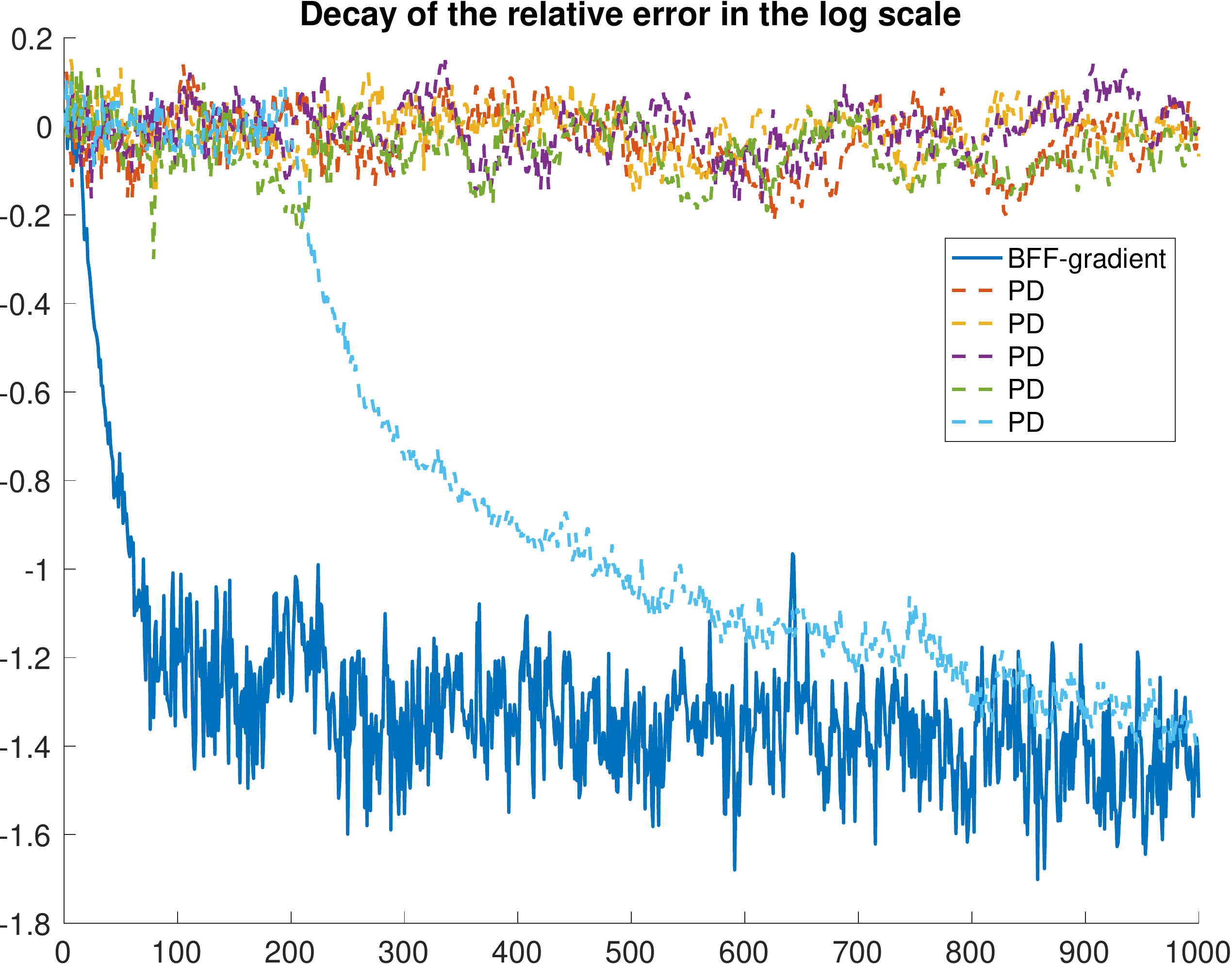}}
\end{figure}

\subsection{Discrete state space}

Consider a Markov decision process with a discrete state space $\S = \{i = 0, \cdots, n-1\}$ for $n = 32$. The
transition matrix is the following,
\begin{equation}  \label{eg2: transition}
  \begin{aligned}
    \P_{i,i+1} = \frac{1}{2} - \frac{1}{5} \sin\frac{2\pi i}{n}, \qd \text{when }i = n-1, i+1 = 0;\\
    \P_{i,i-1} = \frac{1}{2} + \frac{1}{5} \sin\frac{2\pi i}{n}, \qd \text{when }i = 0, i-1 = n-1.\\
  \end{aligned}
\end{equation}
The immediate reward function is $r\in \R^{n}$ with $r_i = 1+ \cos\frac{2\pi i}{n}$, and the discount
rate is $\g = 0.9$. The value function $\vst\in\R^{n}$ satisfies the following Bellman equation,
\begin{equation}  \label{eg2: reference solu}
  \begin{aligned}
    \vst = \T(\vst) = r +\g P \vst,
  \end{aligned}
\end{equation}
and can be solved from \eqref{eg2: reference solu}. The numerical results are carried out
both with the neural network approximation and in the tabular setting.



\paragraph{Neural network approximation.}
The same neural network architecture is used as in (\ref{eg1: NN}) with input $s = \frac{2\pi i}{n}$
for approximating the value function. We run Algorithms \ref{algo: uncor}-\ref{algo: shiftongd} and
 the primal-dual algorithm (\ref{algo: GTD}) to approximate $\thst$ based on a
trajectory $\{\st\}_{m=1}^{T}, T = 4\times10^6$ simulated from (\ref{eg2: transition}).


\begin{figure}[htbp]
  \floatconts {fig: eg2 batch1} {\caption{Discrete state space: approximation with a 3-layer neural
      network with batch size 1.}}  {\includegraphics[width=0.9\textwidth]{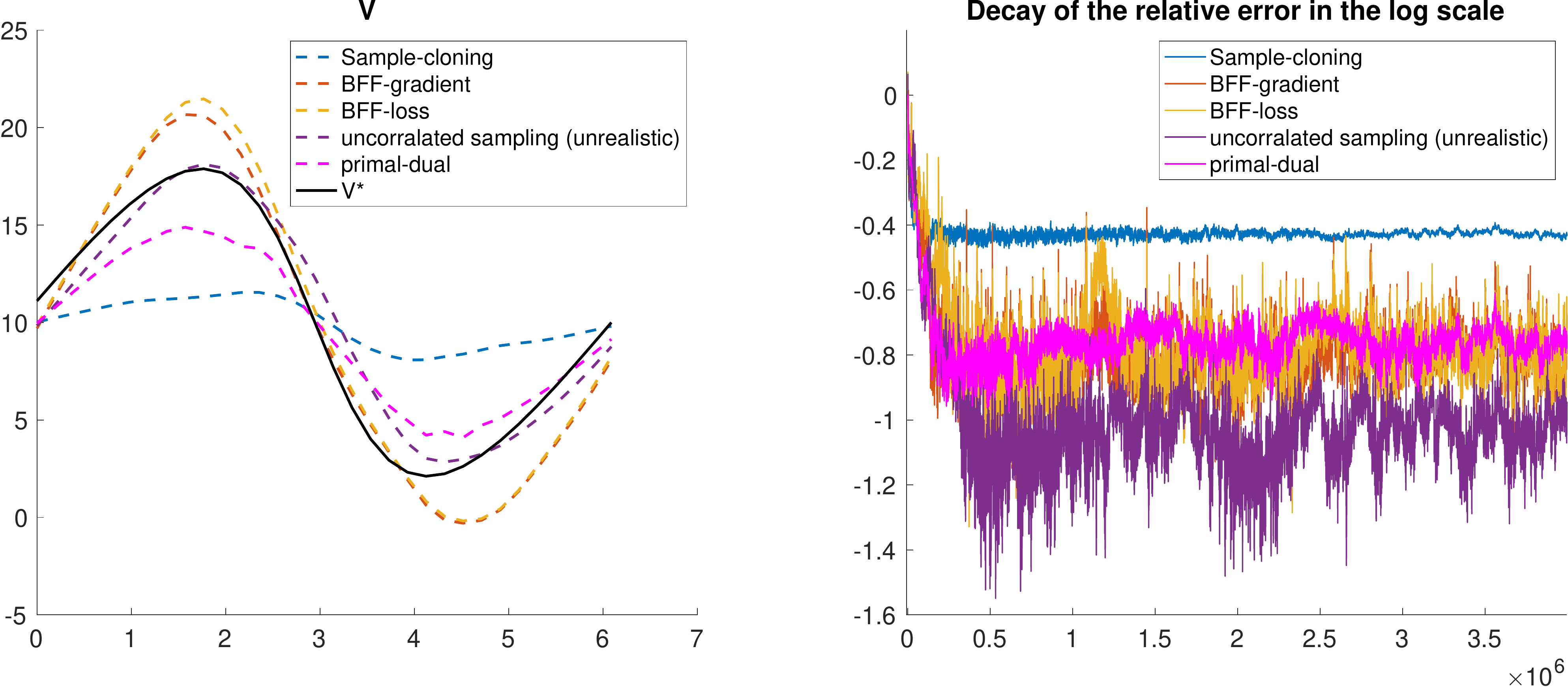}}
\end{figure}

Figure \ref{fig: eg2 batch1} summarizes the results obtained from SGD with a single batch at
the learning rate $\tau = 0.001$ and $\beta = 0.1$ for the primal-dual algorithm \eqref{algo: GTD}. The
same initialization for $\th_0$ and random permutation are used for all simulations. The
initialization for $y_0$ in the primal-dual algorithm is the zero vector.  \color{black} The
relative errors $e_k/e_0$, with $e_k$ defined as
\[
e_k=\sqrt{\sum_{i=0}^{n-1}\l(V\l(\frac{2\pi i}{n},\th_k\r) - \vst_i\r)^2},
\]
are shown in the log scale on the right. The plots of Figure \ref{fig: eg2 batch1} demonstrate that
the performance of the two new algorithms (BFF-loss and BFF-gradient) are very similar. Although
they are less accurate than the uncorrelated case, \tcb{the performance is much better than the
  Sample-cloning algorithm and comparable to the primal-dual algorithm.}



\paragraph{Tabular setting.}
The BFF algorithms proposed in Section \ref{sec: tabular} are used to approximate $\vst$ in the
tabular form. In the experiments, $\tau = 0.1$ and the SGD is run for $5$ epochs. \tcb{Again, we
  compare the BFF with the primal-dual algorithm \eqref{algo: GTD} (equivalent to the SCGD algorithm
  in this case) with $\b = 0.5$.  In the tabular setting, $V(s;\th) = \Phi(s)^\top \th$ with
  $\th\in\R^N$ and $\Phi(s_i)=e_i\in\R^N$, where $\{e_i\}$ are the standard basis vectors of
  $\R^N$.}  The results summarized in Figure \ref{fig: eg2 tabular} shows that the BFF-loss
algorithm converges slightly faster than the primal-dual/SCGD algorithm, and both significantly
better than the Sample-cloning algorithm. Comparing with Figure \ref{fig: eg2 batch1}, it seems
that the neural network approximation results in significantly faster error decay at the initial
stage of the training.

\begin{figure}[htbp]
  \floatconts
      {fig: eg2 tabular}
      {\caption{Discrete state space: tabular approximation.}}
      {\includegraphics[width=0.9\textwidth]{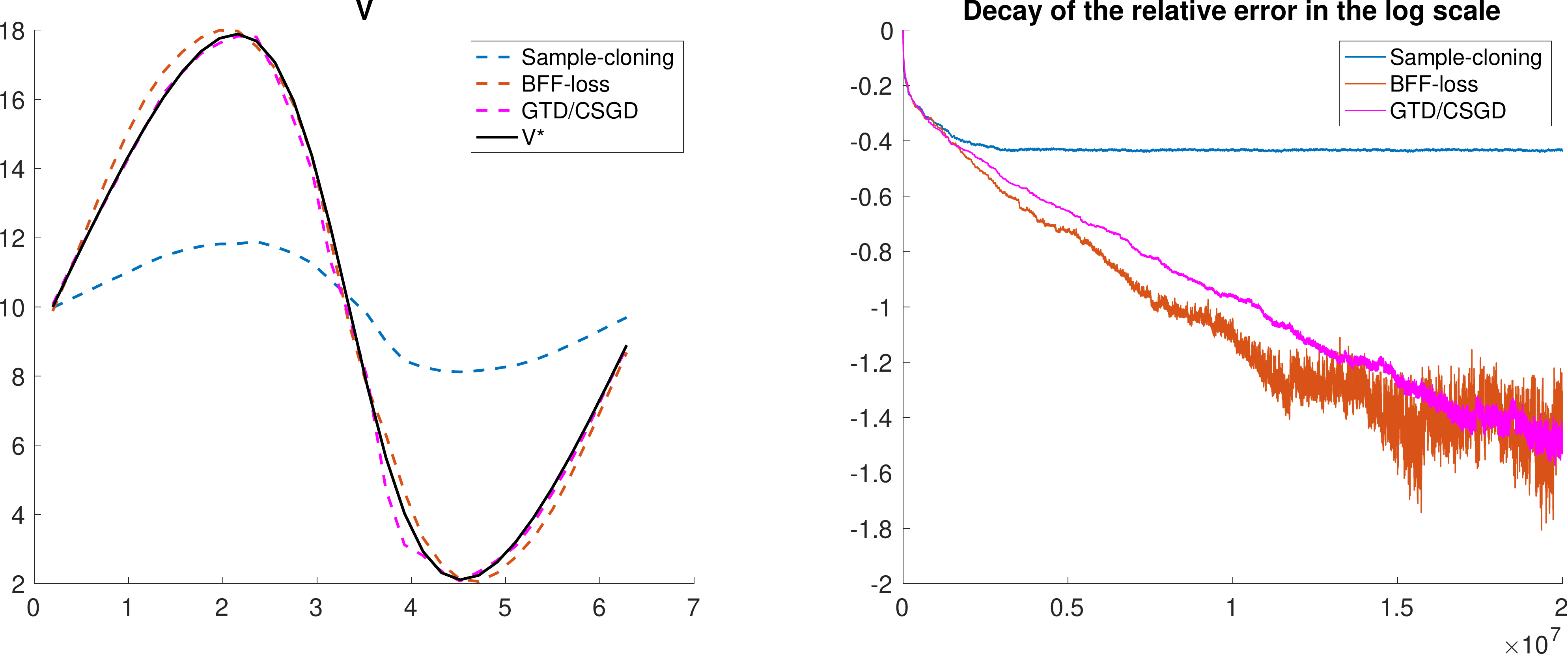}}
\end{figure}

\paragraph{Summary.}
The numerical experiments suggest that BFF algorithms significantly outperform the Sample-cloning
algorithm, as the theory predicted. For the continuous state space setting, the BFF algorithm
performs better than primal-dual algorithms. For the discrete state space setting, BFF is comparable
to the primal-dual algorithm and SCGD.  \color{black}

\acks{ The work of L.Y. and Y.Z. is partially supported by the U.S. Department of Energy, Office of
  Science, Office of Advanced Scientific Computing Research, Scientific Discovery through Advanced
  Computing (SciDAC) program. The work of L.Y. is also partially supported by the National Science
  Foundation under award DMS-1818449. L.Y. thanks Mohammad Ghavamzadeh, Yuandong Tian, and Amy Zhang
  for helpful discussions.  }

\clearpage
\def\cprime{$'$}

\appendix 
\section{Proof of Lemma \ref{lemma: eps est} }
\label{proof of eps est}
\begin{proof}
Let 
\begin{equation*}
    \d(\st,\th) = \E\l[f(\st,\st+\dstp; \th) - f(\st, \st+\dst; \th)|\st \r],
\end{equation*}
then 
\begin{equation}
\label{eq: eps 1}
    \up(\st,\th) = \E\l[f(\l.\st,\stp;\th)\r\vert \st\r] \d(\st,\th);\qd \e(\th) = \frac{1}{2}\E\, \up(\st,\th).
\end{equation}
We first estimate the term $\d(\st,\th)$. By Taylor expansion,
\begin{equation*}
\begin{aligned}
    &f(\st, \st+\dstp; \th) - f(\st, \st+\dst; \th) \\
    =& \l[f(\st, \st+\dstp; \th) - f(\st, \st; \th)\r] - \l[f(\st, \st+\dst - f(\st, \st; \th)\r] \\ 
    =& \pt_{s_2}f(\st,\st;\th)\l(\dstp  - \dst\r)+  \frac{1}{2}\pt^2_{s_2}f(\st,\st;\th)\l(\dstp^2 - \dst^2\r) \\
    &+ \frac{1}{6}\pt^3_{s_2}f(\st,\st;\th)\l(\dstp^3 - \dst^3\r)+ \frac{1}{24}\pt^4_{s_2}f(\st,\st;\th)\l(\dstp^4 - \dst^4\r) \\
    &+  \frac{1}{120}\l(\pt^5_{s_2}f(\st,\st+s';\th)\dstp^5 - \pt^5_{s_2}f(\st,\st+s'';\th)\dst^5\r),
\end{aligned}
\end{equation*}
for some $s'\in(0, \dstp), s''\in(0,\dst)$. By the definition of $\dst$ in (\ref{eq: def of ds}), we
have $\dst^5, \dstp^5\leq o(\dt^2)$, so the last term of the above equation is of order
$o(\dt^2)$. Therefore, one has,
\begin{equation}
\label{eq: d_th 1}
\begin{aligned}
    &\d(\st,\th) = \sum_{i=1}^4 \frac{1}{i!}\pt^i_{s_2}f(\st,\st;\th) \E\l[\dst^i-\dstp^i|\st\r] + o(\dt^2).
\end{aligned}
\end{equation}
The Taylor expansion of $\a(\stp), \s(\stp)$ can be represented by
\begin{equation}
\label{eq: dstp at st}
\begin{aligned}
    &\a(\stp) = \a(\st) + \a' (\st)\dst + \frac{1}{2}\a''(\st)\dst^2 + o(\dt),\\
    &\s(\stp) = \s(\st) + \s'(\st) \dst+ \frac{1}{2}\s''(\st)\dst^2+\frac{1}{6}\s'''(\st)\dst^3 + o(\dt^{3/2}) ,
\end{aligned}
\end{equation}
which gives, 
\begin{equation*}
\begin{aligned}
    &\dstp - \dst = \l(\a(\stp) - \a(\st)\r)\dt + \s(\st)Z_{m+1}\sdt - \s(\st)Z_m\sdt\\
    =&\l(\a' (\st)\dst + \frac{1}{2}\a''(\st)\dst^2\r)\dt+o(\dt^2)  \\
    &+ \l(\s(\st) + \s'(\st) \dst+ \frac{1}{2}\s''(\st)\dst^2+\frac{1}{6}\s'''(\st)\dst^3\r)Z_{m+1}\sdt - \s(\st)Z_m\sdt+o(\dt^2).
\end{aligned}
\end{equation*}
Since $\E[h(\st)Z_m|\st] = \E [g(\st)Z_{m+1}|\st] = 0$ for any functions $h, g$, the last line of the
above equation vanishes after taking conditional expectation on $\st$. This implies,
\begin{equation}
\label{eq: d_th 2}
\begin{aligned}
  &\E\l[\dstp - \dst|\st\r]
  = \E\l[\l(\a' (\st)\dst + \frac{1}{2}\a''(\st)\dst^2\r)\dt|\st\r]+o(\dt^2) \\
  =&\E\l[\l(\a'(\a\dt+\s\sdt Z_m) + \frac{1}{2}\a''(\a^2\dt^2+2\a\s \dt^{3/2}Z_m + \s^2\dt Z_m^2 )\r)\dt|\st\r]+o(\dt^2)\\
  =& \a'\a\dt^2 + \frac{1}{2}\a'' \s^2\dt^2  +o(\dt^2).
\end{aligned}
\end{equation}
Here $\a,\a',\s$ refers to the function's value at $\st$, similar for $\s',\s'',
\pt_{s_2}^if$. We omit $(\st)$ when the functions has its value at $\st$.

Using (\ref{eq: dstp at st}), one can estimate $\dstp^i$ for $i=2,3,4$ as follows, 
\begin{equation*}
\begin{aligned}
    \dstp^2 = &\a(\stp)^2\dt^2+\s(\stp)^2Z_{m+1}^2\dt + 2\a(\stp)\s(\stp)Z_{m+1}\dt^{3/2}\\
    =& \a^2\dt^2 + \l(\s^2 + (\s')^2 \dst^2 +2\s\s'\dst+ \s\s''\dst^2\r)Z_{m+1}^2\dt \\
    &+2\l(\a\s + \a\s'\dst+\a'\s\dst\r)Z_{m+1}\dt^{3/2}+ o(\dt^2);\\
    \dstp^3 = &3\a(\stp)\s^2(\stp)Z_{m+1}^2\dt^2 + \s^3(\stp) Z_{m+1}^3\dt^{3/2} + o(\dt^2)\\
    =&3\a\s^2Z^2_\tp\dt^2+ \s^3(\stp) Z_{m+1}^3\dt^{3/2} + o(\dt^2);\\
    \dstp^4 =&  \s^4(\stp) Z_{m+1}^4\dt^2+ o(\dt^2) =\s^4 Z_{m+1}^4\dt^2 + o(\dt^2).
\end{aligned}
\end{equation*}
Therefore, 
\begin{equation}
\label{eq: d_th 3}
\begin{aligned}
    \E\l[\dstp^2 - \dst^2|\st\r] 
    = &\E\l[ \l(\s'^2 \dst^2 +2\s\s'\dst+ \s\s''\dst^2\r)\dt|\st\r] + o(\dt^2)\\
    =&\E\l[\l( \l(\s'^2 + \s\s''\r) \l(\s^2Z_m^2\dt+o(\dt)\r) + 2\s\s'(\a\dt+\s Z_m\sdt) \r)\dt|\st\r] + o(\dt^2)\\
    =&\l(\s'^2 + \s\s''\r)\s^2 \dt^2 + 2\s\s'\a\dt^2+ o(\dt^2);\\
    \E\l[\dstp^3 - \dst^3|\st\r] = &o(\dt^2);\\
    \E\l[\dstp^4 - \dst^4|\st\r]  = &  o(\dt^2).
\end{aligned}
\end{equation}
Hence, by inserting (\ref{eq: d_th 2}) and (\ref{eq: d_th 3}) into (\ref{eq: d_th 1}) gives,
\begin{align}
    \d(\th) = \l[\pt_{s_2}f\l(\a'\a + \frac{1}{2}\a'' \s^2\r) +\pt^2_{s_2}f \l(\s'^2\s^2 + \s\s''\s^2 + 2\s\s'\a\r)\r]\dt^2 + o(\dt^2).
\end{align}
As defined in (\ref{eq: def of eps}) and (\ref{eq: eps 1}), the completion of the proof is followed by,
\begin{equation*}
\begin{aligned}
    &\up =\frac12 \E [f(\st,\stp;\th)|\st] \d \leq C\dt^2 + o(\dt^2);\\
    &\e = \E\up \leq C\dt^2 + o(\dt^2).
\end{aligned}
\end{equation*}
\end{proof}

\section{Proof of Theorem \ref{thm: p_infty}}
\label{proof of p_infty}
\begin{proof}
We first observe
\begin{equation*}
\begin{aligned}
  \frac{\hpinf}{\pinf} = \frac{Z}{\hZ} \frac{e^{-\hb(J(\th)+\e(\th))}}{e^{-\b J(\th)}} =
  \frac{Z}{\hZ}e^{-(\hb - \b)J(\th)}e^{-\hb\e(\th)} =
  \frac{Z}{\hZ}e^{\frac{\Up}{\eta}\frac{2J(\th)}{\Sig(\Sig+\Up)}}e^{-\frac{\e(\th)}{\eta}\frac{2}{(\Sig+\Up)}}.
\end{aligned}
\end{equation*}
By the fact that $\Up \leq O(\dt^2), \e \leq O(\dt^2)$, we have
\begin{equation*}
  \ll \frac{\hpinf}{\pinf} \rl_{L^\infty} \leq \frac{Z}{\hZ}e^{O\l(\frac{\dt^2}{\eta}\r)}\leq
  \frac{Z}{\hZ}\l(1+O\l(\frac{\dt^2}{\eta}\r)\r).
\end{equation*}
Similarly, it is easy to see that $\frac{Z}{\hZ} \sim (1+O(\dt^2/\eta))$ because, 
\begin{equation*}
\begin{aligned}
  \frac{Z}{\hZ} = \frac{\int_{\O} e^{-\b J}d\th}{\int_{\O}e^{-\b J}e^{-(\hb-\b)J}e^{-\hb\e} d\th} =
  \frac{\int_{\O} e^{-\b J}d\th}{\int_{\O}e^{-\b J} \l(1+O\l(\frac{\dt^2}{\eta}\r)\r)^2 d\th} =
  1+O\l(\frac{\dt^2}{\eta}\r).
\end{aligned}
\end{equation*}
Therefore, 
\begin{equation*}
  \ll \frac{\hpinf}{\pinf} \rl_{L^\infty} \leq  \l(1+O\l(\frac{\dt^2}{\eta}\r)\r)^2 \leq 1+O\l(\frac{\dt^2}{\eta}\r).
\end{equation*}
\end{proof}

\section{Proof of Theorem \ref{thm: p_t}}
\label{proof of p_t}
\begin{proof}
Letting $h(t,\th) = \hp(t, \th) - \p(t,\th)$ and subtracting (\ref{eq: eq for p}) from (\ref{eq: eq
  for hp}) leads to
\begin{equation*}
  \begin{aligned}
    \pt_t h =&\nb\cdot\l[(\nb J+\nb \e) \hp + \frac{1}{\hb} \nb\hp\r]-\nb\cdot\l[\nb J \p+ \frac{1}{\b}\nb \p\r] \\
    =& \nb\cdot\l[\nb J h+ \frac{1}{\b}\nb h\r] + \nb\cdot\l[\nb \e \hp + \l(\frac{1}{\hb} - \frac{1}{\b}\r)\nb\hp\r]\\
    =& \nb\cdot\l[\pinf \nb\l(\frac{h}{\pinf}\r)\r] + \nb\cdot\l[\nb \e \hp + \l(\frac{1}{\hb} - \frac{1}{\b}\r)           \nb\hp\r].
  \end{aligned}
\end{equation*}
Multiply $\frac{h}{\pinf}$ to the above equation, then integrate it over $\th$, one has, 
\begin{equation*}
\begin{aligned}
    \frac12\pt_t \ll h\rl^2_\stt 
     =& \l.\frac{h}{\pinf}\l((\nb J+\nb \e) \hp + \frac{1}{\hb} \nb\hp\r)\cdot \vec{n} \r\vert_{\pt \O} - \l.\frac{h}{\pinf}\l(\nb J \p+ \frac{1}{\b}\nb \p\r)\cdot \vec{n} \r\vert_{\pt \O}\\
     &-\int\l[ \nb\l(\frac{h}{\pinf}\r)\r]^2 \pinf d\th - \int  \l[\nb \e \hp + \l(\frac{1}{\hb} - \frac{1}{\b}\r)  \nb\hp\r]\cdot \nb\l(\frac{h}{\pinf}\r)d\th.
\end{aligned}
\end{equation*}
The first two terms on the RHS vanish because of the reflecting boundary condition (\ref{eq: ref
  bdy cond}). Since $\nb\e,\Up \leq O(\dt^2)$ have been shown in Lemma \ref{lemma: eps est} and
(\ref{eq: bd of Up}), this leads to the two coefficients of the last term can be bounded by
$\ll\nb\e\rl_{L^\infty} \leq C_1\dt^2$, $\frac{1}{\hb} - \frac{1}{\b} = \eta\Up/2\leq
C_2\eta\dt^2$. Therefore, applying Young's inequality to the last term gives,
\begin{equation*}
\begin{aligned}
    &\int  \l[\nb \e \hp + \l(\frac{1}{\hb} - \frac{1}{\b}\r)  \nb\hp\r]\cdot \nb\l(\frac{h}{\pinf}\r)d\th\\
     \leq& \ll \nb \e\rl_{L^\infty}\int  \lv   \hp \cdot  \nb\l(\frac{h}{\pinf}\r)\rv d\th +\l(\frac{1}{\hb} - \frac{1}{\b}\r)  \int  \lv \nb\hp\cdot \nb\l(\frac{h}{\pinf}\r)\rv d\th\\
     \leq& \frac12 C_1\dt^2\l(\ll \nb  \hp  \rl_\stt^2 + \int \l[\nb\l(\frac{h}{\pinf}\r)\r]^2 \pinf  d\th\r) + \frac12 C_2\eta\dt^2\l(  \ll \nb\hp \rl^2_\stt + \int \l[\nb\l(\frac{h}{\pinf}\r)\r]^2\pinf d\th\r).
\end{aligned}
\end{equation*}
The third term can be bounded according to the Poincare Inequality (\ref{eq: poincare ineq}), thus
one has
\begin{equation}
\label{eq: p_t 1}
\begin{aligned}
    &\frac12\pt_t \ll h\rl^2_\stt \\
     \leq & -\frac{\lam}{2} \ll h \rl^2_\stt -\frac{1}{2}\int\l[ \nb\l(\frac{h}{\pinf}\r)\r]^2 \pinf d\th+ \frac{\dt^2}{2}\l(C_1\ll \hp \rl^2_\stt + C_2\eta\ll \nb\hp \rl^2_\stt + (C_1 + C_2\eta)\int\l[ \nb\l(\frac{h}{\pinf}\r)\r]^2 \pinf d\th\r)\\
     \leq & -\frac{\lam}{2} \ll h \rl^2_\stt + \frac{\dt^2}{2}\l(C_1\ll \hp \rl^2_\stt + \eta C_2\ll \nb\hp \rl^2_\stt \r).
\end{aligned}
\end{equation}
Since we only consider the case when $\dt<\!\!<1$, so the coefficient of $\int\l[
  \nb\l(\frac{h}{\pinf}\r)\r]^2 \pinf d\th$, $-(1-\dt^2(C_1+C_2\eta))/2$, is always negative, which gives the
last inequality of the above estimates.

Therefore, as long as $\ll \hp \rl^2_\stt, \ll \nb\hp \rl^2_\stt $ are bounded, we can bound
$\ll h \rl^2_\stt$ from (\ref{eq: p_t 1}). We prove the boundedness of $\ll \hp \rl^2_\stt, \ll
\nb\hp \rl^2_\stt$ in Lemma \ref{lemma: bd of hp} and Lemma \ref{lemma: bd of nb_hp} in Appendix
\ref{apdx: boundedness} and \ref{apdx: boundedness 2}.  Then, we can bound the last term of (\ref{eq: p_t
  1}) by $$\ds \frac{\dt^2}{2}\l(C_1\ll \hp \rl^2_\stt + \eta C_2\ll \nb\hp
\rl^2_\stt \r) \leq \frac{C}{2}\ll\frac{\hpinf}{\pinf} \rl_{L^\infty}\dt^2,$$ for some constant
$C$. Hence from (\ref{eq: p_t 1}), we have,
\begin{equation*}
\begin{aligned}
    &\pt_t\l( e^{\lam t}\ll h\rl^2_\stt\r) 
     \leq  e^{\lam t} \l(C\ll\frac{\hpinf}{\pinf} \rl_{L^\infty} \dt^2\r), \\
    &e^{\lam t}\ll h\rl^2_\stt - \ll h(0)\rl^2_\stt \leq \frac{1}{\lam}(e^{\lam t} - 1)C\ll\frac{\hpinf}{\pinf} \rl_{L^\infty}\dt^2.
\end{aligned}
\end{equation*}
Since $\p(0,\th) = \hp(0,\th)$, $h(0,\th) = 0$. Therefore,
\begin{equation*}
\begin{aligned}
    &\ll h\rl^2_\stt  \leq \frac{1}{\lam}(1- e^{-\lam t})C\ll\frac{\hpinf}{\pinf} \rl_{L^\infty}\dt^2 \leq \l(1+O\l(\frac{\dt^2}{\eta}\r)\r)O(\dt^2) .
\end{aligned}
\end{equation*}

\end{proof}

\section{}
\label{apdx: boundedness}
\begin{lemma}
\label{lemma: bd of hp}
The solution to (\ref{eq: eq for hp}) is bounded $$\ds\ll \hp \rl^2_\stt \leq
C\ll\frac{\hpinf}{\pinf} \rl_{L^\infty},$$with some constant $C$ related to the initial data.
\end{lemma}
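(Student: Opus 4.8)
The plan is to recognize that, under the hypothesis that $\Sig$ and $\Up$ are constant in $\th$, equation (\ref{eq: eq for hp}) is a Fokker--Planck equation whose stationary solution is precisely $\hpinf$, and hence it carries a natural relative-$\chi^2$ Lyapunov structure with respect to $\hpinf$. Since $\hpinf = \frac1{\hZ}e^{-\hb(J+\e)}$ gives $\nb\hpinf = -\hb(\nb J+\nb\e)\hpinf$, and $\frac{\eta}{2}(\Sig+\Up)=\frac1{\hb}$, one can rewrite (\ref{eq: eq for hp}) in divergence form
\begin{equation*}
  \pt_t\hp = \frac1{\hb}\,\nb\cd\l[\hpinf\,\nb\l(\frac{\hp}{\hpinf}\r)\r],
\end{equation*}
and the reflecting boundary condition (\ref{eq: ref bdy cond}) reduces to $\nb(\hp/\hpinf)\cd\vec n = 0$ on $\pt\O$.

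First I would differentiate the relative energy $\int_\O \hp^2/\hpinf\,d\th$ in time. Multiplying the rewritten equation by $\hp/\hpinf$, integrating over $\O$, using the reflecting boundary condition to discard the boundary term, and integrating by parts once yields
\begin{equation*}
  \frac12\,\pt_t\int_\O\frac{\hp^2}{\hpinf}\,d\th = -\frac1{\hb}\int_\O\hpinf\,\lv\nb\l(\frac{\hp}{\hpinf}\r)\rv^2 d\th \le 0 .
\end{equation*}
Hence $t\mapsto\int_\O \hp(t,\th)^2/\hpinf(\th)\,d\th$ is non-increasing; note that, unlike in Theorem \ref{thm: p_t}, no Poincar\'e inequality or spectral gap is needed here, monotonicity alone suffices. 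Consequently this quantity stays bounded by its initial value $C_0 := \int_\O \hp(0,\th)^2/\hpinf(\th)\,d\th$, which is finite because $\O$ is bounded, $\hpinf$ is bounded away from $0$ on $\O$ (as $J,\e$ are bounded on the compact $\O$), and $\hp(0,\cd)$ is a square-integrable probability density.

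Finally I would pass from the relative energy to the $\stt$-norm by splitting $1/\pinf = (1/\hpinf)(\hpinf/\pinf)$:
\begin{equation*}
  \ll\hp\rl_\stt^2 = \int_\O\frac{\hp^2}{\pinf}\,d\th = \int_\O\frac{\hp^2}{\hpinf}\cdot\frac{\hpinf}{\pinf}\,d\th \le \ll\frac{\hpinf}{\pinf}\rl_{L^\infty}\int_\O\frac{\hp^2}{\hpinf}\,d\th \le C_0\,\ll\frac{\hpinf}{\pinf}\rl_{L^\infty},
\end{equation*}
which is the claimed bound with $C = C_0$; the factor $\ll\hpinf/\pinf\rl_{L^\infty}$ is finite by Theorem \ref{thm: p_infty}.

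The one genuinely delicate point, and the step I expect to be the main obstacle, is the very first one: it is essential that $\Sig+\Up$ be independent of $\th$, both to collapse the diffusion term into the clean form $\nb\cd[\hpinf\nb(\hp/\hpinf)]$ and to ensure that the prescribed reflecting boundary condition is exactly the one annihilating the boundary term in the energy identity. If $\Sig+\Up$ varied with $\th$ one would lose this gradient-flow structure and have to control additional drift/cross terms. Everything after that first step is a routine one-line energy estimate.
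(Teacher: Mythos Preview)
Your proof is correct and shares the same core structure as the paper's: rewrite (\ref{eq: eq for hp}) in the gradient-flow form $\pt_t\hp=\frac1{\hb}\nb\cd[\hpinf\nb(\hp/\hpinf)]$, use the reflecting boundary condition to drop the boundary term in the energy identity, and convert to the $\stt$-norm via the factor $\ll\hpinf/\pinf\rl_{L^\infty}$. The one genuine difference is that the paper first subtracts the equilibrium, setting $g=\hp-\hpinf$, and then applies the Poincar\'e inequality (\ref{eq: poincare ineq}) (using $\int g\,d\th=0$) to obtain the exponential decay $\ll g(t)\rl_{\hst}^2\le e^{-2\lam t}\ll g(0)\rl_{\hst}^2$; it then uses the orthogonality $\int g\,d\th=0$ to write $\ll\hp\rl_{\hst}^2=\ll g\rl_{\hst}^2+\ll\hpinf\rl_{\hst}^2$ and bounds this by $e^{-2\lam t}\ll g(0)\rl_{\hst}^2+1$. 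You instead work with $\hp$ directly and use only the monotonicity $\pt_t\ll\hp\rl_{\hst}^2\le 0$, which is cleaner and avoids Poincar\'e altogether for this lemma. The price is that the paper's stronger exponential-decay estimate on $\ll g\rl_{\hst}^2$ is reused verbatim in the proof of the next lemma (Lemma \ref{lemma: bd of nb_hp}) to close the Gr\"onwall argument for $\ll\nb g\rl_{\hst}^2$; your monotonicity bound would not suffice there, so if you are also proving that lemma you will need the Poincar\'e step anyway.
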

\begin{proof}
We first prove that the difference of $\hp$ and $\hpinf$ is exponentially decay. Define norm $\ll g
\rl^2_\hst$ as follows,
\begin{equation*}
    \ll g \rl^2_\hst = \int g^2\frac{1}{\hpinf} d\th,
\end{equation*}
where $\hpinf$ is defined in Theorem \ref{thm: p_infty}. 
Let $g = \hp - \hpinf$, then $g$ satisfies, 
\begin{equation}
\label{eq: g}
    \pt_tg = \nb\cdot\l[\hpinf\nb\l(\frac{g}{\hpinf}\r)\r].
\end{equation}
Multiplying $\frac{g}{\hpinf}$, and integrating it over $\th$, after integration by parts, one has
\begin{equation*}
    \frac12\pt_t\ll g \rl^2_\hst = -\int \hpinf\l[\nb\l(\frac{g}{\hpinf}\r)\r]^2d\th \leq - \lam \ll g \rl_\hst^2,
\end{equation*}
where the last inequality follows from the Poincare inequality (\ref{eq: poincare ineq}) and the
fact that $\int g d\th = 1-1 = 0$. Solve the above ODE, one has,
\begin{equation}
\label{eq: bd of hp 0}
\begin{aligned}
    \ll g(t) \rl^2_\hst \leq e^{-2\lam t} \ll g(0) \rl^2_\hst
\end{aligned}
\end{equation}
Therefore, one can bound $\hp$ by
\begin{equation}
\label{eq: bd of hp 1}
\begin{aligned}
    \ll \hp \rl^2_\stt =& \ll \hp \sqrt{\frac{\hpinf}{\pinf}} \rl^2_\hst \leq \ll\frac{\hpinf}{\pinf} \rl_{L^\infty} \ll \hp \rl^2_\hst\leq \ll\frac{\hpinf}{\pinf} \rl_{L^\infty}\l(\ll \hp - \hpinf \rl^2_\hst + \ll \hpinf \rl^2_\hst\r) \\
    \leq&\ll\frac{\hpinf}{\pinf} \rl_{L^\infty}\l(e^{-2\lam t}\ll g(0) \rl^2_\hst + 1\r) .   
\end{aligned}
\end{equation}

\end{proof}

\section{}
\label{apdx: boundedness 2}
\begin{lemma}
\label{lemma: bd of nb_hp}
The gradient of the solution to (\ref{eq: eq for hp}) is bounded $$\ds\ll \nb\hp \rl^2_\stt \leq
C\ll\frac{\hpinf}{\pinf} \rl_{L^\infty},$$ with some constant $C$ related to the initial data.
\end{lemma}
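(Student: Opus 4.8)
The plan is to run the argument of Lemma \ref{lemma: bd of hp} one derivative higher, exploiting that $u:=\hp/\hpinf$ solves a diffusion equation in self-adjoint form whose Dirichlet energy is non-increasing. Since $\Sig$ and $\Up$ are constants, \eqref{eq: eq for hp} takes the same form as \eqref{eq: g}, namely $\pt_t\hp=\nb\cd\l[\hpinf\,\nb(\hp/\hpinf)\r]$, so $u=\hp/\hpinf$ solves $\pt_t u=\mL u$ with $\mL u:=\Delta u-\hb\,\nb\hJ\cd\nb u$; here I use $\nb\hpinf=-\hb\,\nb\hJ\,\hpinf$, which gives $\nb\cd(\hpinf\nb u)=\hpinf\,\mL u$. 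From the reflecting boundary condition \eqref{eq: ref bdy cond} and the identity $\hb\,\nb\hJ\,\hp+\nb\hp=\hpinf\,\nb u$, one reads off the homogeneous Neumann condition $\nb u\cd\vec n=0$ on $\pt\O$. Finally $\int u\,\hpinf\,d\th=\int\hp\,d\th=1$.

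First I would show the Dirichlet energy $E(t):=\frac12\int|\nb u|^2\hpinf\,d\th$ does not increase. Differentiating in $t$, substituting $\pt_t u=\mL u$, integrating by parts, and using $\nb u\cd\vec n|_{\pt\O}=0$ together with $\nb\cd(\hpinf\nb u)=\hpinf\,\mL u$ gives
\begin{equation*}
\frac{d}{dt}E(t)=\int\nb u\cd\nb(\mL u)\,\hpinf\,d\th=-\int(\mL u)^2\hpinf\,d\th\leq 0,
\end{equation*}
hence $\int|\nb u(t)|^2\hpinf\,d\th\leq\int|\nb u(0)|^2\hpinf\,d\th=:C_0$, a constant fixed by the initial data. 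Applying the Poincar\'e inequality \eqref{eq: poincare ineq} with $d\mu=\hpinf\,d\th$ to $u-1$ (which has zero $\hpinf$-mean) then gives $\int(u-1)^2\hpinf\,d\th\leq C_0/\lam$, so $\int u^2\hpinf\,d\th=\int(u-1)^2\hpinf\,d\th+1\leq C_0/\lam+1$; alternatively this last bound follows from $\int u^2\hpinf\,d\th=\ll\hp\rl^2_\hst$ and the estimate on $\ll\hp\rl^2_\hst$ already obtained in the proof of Lemma \ref{lemma: bd of hp}.

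To conclude I would transfer these estimates to $\nb\hp$. Using $\nb\hp=\hpinf\l(\nb u-\hb\,u\,\nb\hJ\r)$,
\begin{equation*}
\int\frac{|\nb\hp|^2}{\hpinf}\,d\th\leq 2\int|\nb u|^2\hpinf\,d\th+2\hb^2\ll\nb\hJ\rl_{L^\infty}^2\int u^2\hpinf\,d\th\leq C,
\end{equation*}
where $\ll\nb\hJ\rl_{L^\infty}<\infty$ because Assumption \ref{ass: J hJ} bounds the Hessian of $\hJ$ and $\O$ is compact, and $C$ depends only on the initial data (together with $\hb$, $\ll\nb\hJ\rl_{L^\infty}$ and $\lam$). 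Then, exactly as at the end of Lemma \ref{lemma: bd of hp},
\begin{equation*}
\ll\nb\hp\rl^2_\stt=\int|\nb\hp|^2\frac{1}{\pinf}\,d\th=\int\frac{|\nb\hp|^2}{\hpinf}\cd\frac{\hpinf}{\pinf}\,d\th\leq\ll\frac{\hpinf}{\pinf}\rl_{L^\infty}\int\frac{|\nb\hp|^2}{\hpinf}\,d\th\leq C\ll\frac{\hpinf}{\pinf}\rl_{L^\infty}.
\end{equation*}

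The main obstacle is the energy identity: making rigorous that $E(t)$ is differentiable and that the boundary term genuinely vanishes. This needs enough parabolic regularity of $u$ up to $\pt\O$, and it is here that two structural facts are essential — that $\Sig+\Up$ is constant, so the reflecting condition on $\hp$ collapses to the clean Neumann condition $\nb u\cd\vec n=0$, and that the Hessian of $\hJ$ is bounded (Assumption \ref{ass: J hJ}), so $\nb\hJ$ can be controlled in the pointwise identity $\nb\hp=\hpinf(\nb u-\hb\,u\,\nb\hJ)$.
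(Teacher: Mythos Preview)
Your argument is correct and takes a genuinely different route from the paper's. The paper differentiates the equation \eqref{eq: g} for $g=\hp-\hpinf$ in each coordinate $\th_i$, multiplies by $\pt_{\th_i}g/\hpinf$, and sums to obtain a differential inequality $\frac12\pt_t\ll\nb g\rl^2_\hst\leq-\frac{\lam}{2}\ll\nb g\rl^2_\hst+\frac{M}{2}\ll g\rl^2_\hst$; the Hessian bound in Assumption \ref{ass: J hJ} enters to control the commutator term $\sum_i\int\l[\nb(\pt_{\th_i}\hJ)g\r]^2\frac{1}{\hpinf}d\th$, and Gronwall together with the exponential decay of $\ll g\rl^2_\hst$ from Lemma \ref{lemma: bd of hp} closes the estimate. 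You instead work with the ratio $u=\hp/\hpinf$ and show that the Dirichlet energy $\int|\nb u|^2\hpinf\,d\th$ is monotone non-increasing, which is cleaner: you avoid differentiating the PDE and the resulting commutator, and you do not need the decay of $g$ as an input. The paper's method does yield something stronger --- exponential decay of $\ll\nb g\rl^2_\hst$ rather than merely a uniform bound --- but only the bound is required for this lemma. Your use of Assumption \ref{ass: J hJ} is also lighter: the paper uses the Hessian bound to dominate the commutator, whereas you use it (plus compactness of $\O$) only to secure $\ll\nb\hJ\rl_{L^\infty}<\infty$ in the pointwise identity for $\nb\hp$. The regularity and boundary concerns you honestly flag apply equally to the paper's version, which assumes without comment that the reflecting condition survives differentiation in $\th_i$.
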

\begin{proof}
Similar to (\ref{eq: bd of hp 1}) in the proof of Lemma \ref{lemma: bd of hp}, it is sufficient to
prove this Lemma if we get the estimation for $\ll \nb g(t) \rl^2_\hst$ with $g = \hp - \hpinf$,
because
\begin{align}
\label{eq: bd of nbhp 1}
    & \ll \nb\hp \rl^2_\stt \leq \ll\frac{\hpinf}{\pinf} \rl_{L^\infty}\l(\ll \nb g \rl^2_\hst + 1\r)
\end{align}

First notice that reflecting boundary condition also holds for $\pt_{\th_i}\hp$, that is,
\begin{align}
    &\l.\pt_{\th_i}\l((\nb J+\nb \e) \hp + \frac{\eta}{2}\nb\cdot(\Sig + \Up \hp) \r)\cdot \vec{n} \r\vert_{\pt\O}= 0,\nonumber
\end{align}
Then take $\pt_{\th_i}$ to (\ref{eq: g}), one has, 
\begin{equation*}
    \pt_t\pt_{\th_i} g = \nb\cdot\l[\hpinf\nb\l(\frac{\pt_{\th_i} g}{\hpinf}\r)\r] + \nb\cdot\l[\nb\l(\pt_{\th_i} \hJ\r) g\r].
\end{equation*}
Multiplying $\frac{\pt_{\th_i}g}{\hpinf}$, and summing it over $i$, integrating it over $\th$ gives
\begin{equation*}
\begin{aligned}
    \frac12\pt_t\ll \nb g \rl^2_\hst =& \l.\pt_{\th_i}\l((\nb J+\nb \e) g + \frac{\eta}{2}\nb\cdot(\Sig + \Up g) \r)\cdot \vec{n} \r\vert_{\pt\O}\\
    &-\sum_i \int \hpinf\l[\nb\l(\frac{\pt_{\th_i}g}{\hpinf}\r)\r]^2d\th - \sum_i\int \l[\nb\l(\pt_{\th_i} \hJ\r) g\r]\cdot \nb\l(\frac{\pt_{\th_i}g}{\hpinf}\r) d\th\\
    \leq& -\frac{\lam}{2}\sum_i\ll\pt_{\th_i}g\rl^2_\hst -\frac{1}{2}\sum_i \int \hpinf\l[\nb\l(\frac{\pt_{\th_i}g}{\hpinf}\r)\r]^2d\th \\
    &+\frac{1}{2}\sum_i\int \l[\nb\l(\pt_{\th_i} \hJ\r) g\r]^2\frac{1}{\hpinf} d\th + \frac{1}{2}\sum_i \int \hpinf\l[\nb\l(\frac{\pt_{\th_i}g}{\hpinf}\r)\r]^2d\th\\
    \leq &-\frac{\lam}{2}\ll\nb g\rl^2_\hst  + \frac{1}{2}\sum_i\int \l[\nb\l(\pt_{\th_i} \hJ\r) g\r]^2\frac{1}{\hpinf} d\th \leq -\frac{\lam}{2}\ll\nb g\rl^2_\hst  + \frac{M}{2} \ll g\rl_\hst^2,    
\end{aligned}
\end{equation*}
where the second assumption in Assumption \ref{ass: J hJ} is applied to obtain the last
inequality. Use the estimation of $\ll g \rl^2_\hst$ in (\ref{eq: bd of hp 0}) to solve the above
ODE,
\begin{equation*}
\begin{aligned}
    &\pt_t\l(e^{\lam t}\ll \nb g \rl^2_\hst\r) \leq Me^{\lam t} \l(e^{-2\lam t}\ll g(0)\rl^2_\hst\r) \leq Me^{-\lam t}\ll g(0)\rl^2_\hst,    \\
    &e^{\lam t}\ll \nb g \rl^2_\hst - \ll \nb g (0) \rl^2_\hst \leq M\ll g(0)\rl^2_\hst\frac{1}{\lam}(1-e^{-\lam t}),\\
    &\ll \nb g \rl^2_\hst  \leq e^{-\lam t}\l(\ll \nb g (0) \rl^2_\hst + \frac{M}{\lam}\ll g(0)\rl^2_\hst\r).
\end{aligned}
\end{equation*}
Therefore, by (\ref{eq: bd of nbhp 1}), one has
\begin{equation*}
    \ll \nb\hp \rl^2_\stt \leq \ll\frac{\hpinf}{\pinf} \rl_{L^\infty}\l(e^{-\lam t}\l(\ll \nb g (0)
    \rl^2_\hst + \frac{M}{\lam}\ll g(0)\rl^2_\hst\r) + 1\r).
\end{equation*}
\end{proof}

\end{document}